\definecolor{labelkey}{gray}{.8}
\definecolor{refkey}{gray}{.8}
\definecolor{darkred}{rgb}{0.9,0.1,0.1}
\definecolor{darkgreen}{rgb}{0,0.5,0}
\numberwithin{equation}{section}
\newcommand{\FF}{{\cal F}}
\newcommand{\HH}{{\cal H}}
\newcommand{\MM}{{\cal M}}
\newcommand{\PP}{{\cal P}}
\newcommand{\BR}{{\mathbb R}}
\newcommand{\sgn}{\mbox{\rm sgn}}
\newcommand{\sol}{{\mathcal S^{2}(0,T;\mathbb R^l)\times \mathcal M^{2}_0(0,T;\mathbb R^l)}}
\newcommand{\R}{{\mathbb R}}
\newcommand{\cal}{\mathcal}
\newtheorem{theorem}{\bf Theorem}[section]
\newtheorem{proposition}[theorem]{\bf Proposition}
\newtheorem{lemma}[theorem]{\bf Lemma}
\theoremstyle{definition}
\newtheorem{definition}[theorem]{Definition}
\newtheorem{example}[theorem]{\bf Example}
\newtheorem{remark}[theorem]{Remark}
\numberwithin{equation}{section}
\begin{document}
\title[Nonlinear BSDEs  in general filtration]{Nonlinear BSDEs  in general filtration with  drivers depending on the martingale part of a solution}
\author{Tomasz Klimsiak and Maurycy Rzymowski}
\address[Tomasz Klimsiak]{Institute of Mathematics, Polish Academy Of Sciences,
ul. \'{S}niadeckich 8,   00-656 Warsaw, Poland, \and Faculty of
Mathematics and Computer Science, Nicolaus Copernicus University,
Chopina 12/18, 87-100 Toruń, Poland, e-mail: {\tt tomas@mat.umk.pl}}

\address[Maurycy Rzymowski]{Faculty of
Mathematics and Computer Science, Nicolaus Copernicus University,
Chopina 12/18, 87-100 Toruń, Poland, e-mail: {\tt maurycyrzymowski@mat.umk.pl}}
\footnotetext{{\em Mathematics Subject Classification:}
Primary  60H20; Secondary 60G40}

\footnotetext{{\em Keywords:} backward stochastic differential equations, general filtration, global solutions}
\date{}
\maketitle
\begin{abstract}
In the present paper, we consider multidimensional  nonlinear backward stochastic differential equations (BSDEs) with  a driver depending on the martingale part $M$ of a solution. We assume that the nonlinear term is merely monotone continuous with respect to the state variable.
As to the regularity of the driver with respect to the  martingale variable, we consider a very  general condition which permits
path-dependence on "the future" of the process $M$ as well as  a dependence  of its law (McKean-Vlasov-type equations).
For such driver, we prove the existence and uniqueness  of  a {\em global solution} (i.e. for any maturity $T>0$) to BSDE with data  satisfying natural integrability conditions.
\end{abstract}

\section{Introduction}
\label{sec1}

\medskip
Consider  a filtered probability space  $(\Omega,\FF,P,\mathbb{F}:=(\mathcal{F}_t)_{t\ge 0})$,  maturity $T>0$,  $\mathcal{F}_T$-measurable  terminal condition  $\xi$, and a driver  
\[
f:\Omega\times [0,T]\times\mathbb{R}^l\times \mathcal M^{2}_0(0,T;\BR^l)\rightarrow\mathbb{R}^l
\]
which is $\mathbb F$-progressively measurable with respect to the first two variables. 
Here $\MM^{2}_0(0,T;\BR^l)$ denotes  the space of $l$-dimensional c\`adl\`ag $\mathbb F$-martingales  starting at zero  with  square integrable  supremum over $[0,T]$.  In the present paper, we study backward stochastic differential equations  (BSDEs) of the  form
\begin{equation}\label{eqi.1}
Y_t=\xi+\int^T_t f(r,Y_r,M)\,dr-\int^T_t\,dM_r,\quad t\in[0,T].
\end{equation}
A solution to \eqref{eqi.1} consists of an $l$-dimensional  $\mathbb F$-adapted c\`adl\`ag  process $Y$, and $M\in\MM^{2}_0(0,T;\BR^l)$
such that \eqref{eqi.1} holds $P$-a.s.  
We follow and generalize here the framework considered by Liang,  Lyons and Qian in \cite{LLQ}, and Bensoussan, Li and Yam in \cite{BLY}.  In these papers, the authors considered a special class of  \eqref{eqi.1} with a driver of the form
\begin{equation}\label{eqi.0}
f(r,Y_r,M)=g(r,Y_r,h(M)_r),
\end{equation}
where    $h$ is an operator, which maps $\MM^{2}_0(0,T;\BR^l)$ in the space  $\HH^2_{\mathbb F}(0,T;\mathbb R^m)$ - consisting of $\mathbb R^m$-valued $\mathbb F$-progressively measurable processes that are square integrable with respect to $dt\otimes dP$ - and  $g:[0,T]\times\Omega\times\mathbb{R}^l\times \mathbb R^m\rightarrow\mathbb{R}^l$.

The  purpose  of the present paper is to address the conjecture made in \cite{LLQ} about the existence
of a {\em global solution} to \eqref{eqi.1} with $f$ given by \eqref{eqi.0}   under the following condition on the operator $h$:  for some $\gamma_h> 0$, and any $t\in [0,T]$,
\begin{equation}
\label{eqi.2}
E\int_t^T|h(M)_r-h(N)_r|^2\,dr\le \gamma_h E\int_t^T \, d[M-N]_r,\quad  M,N\in   \MM^{2}_0(0,T;\BR^l).
\end{equation}
Under this condition, and Lipschitz continuity  of $g$
with respect to the second and the third variable, the authors proved in \cite{LLQ} that for small enough $T>0$, there exists a 
solution to \eqref{eqi.1} - the so called  {\em local solution}. For the existence of {\em global solutions} (i.e. for any $T>0$) $h$ is additionally  assumed in \cite{LLQ} to satisfy  {\em local-in-time property}:  for any $a<b,\, a,b\in [0,T]$, and any $M\in\MM^{2}_0(0,T;\BR^l)$,
\begin{equation}
\label{eqi.litp}
h(M)_t=h(M^{a,b})_t,\quad t\in (a,b)\quad\mbox{with} \quad M^{a,b}_t:=M_{(a\vee t)\wedge b},\quad t\in [0,T].
\end{equation}
The above condition stands that the evaluation of $h(M)_t$ depends only on values of $M_s,\, s\in [t,t+\varepsilon)$ for however small $\varepsilon>0$. In the paper, the authors conjectured (see the comments at the end of Section 3 in \cite{LLQ}) that  
regarding operator $h$, condition \eqref{eqi.2} alone is not sufficient  to guarantee the existence of a  {\em global solution} to \eqref{eqi.1}, and suggested that the following operator may serve as an example supporting their conjecture:
\begin{equation}
\label{eqi.3} 
h_1(M)_t=E\big(\int_t^T\,d[M]_r|\FF_t),\quad t\in [0,T].
\end{equation}
We see that the evaluation of $h_1(M)_t$ depends on the whole future, i.e. it depends on the values $M_s,\, s\in [t,T]$,
so it does not share {\em local-in-time property}.

Recently,  Cheridito  and  Nam in  \cite{CN},  considered even more general framework than presented above. They introduced the notion of backward stochastic equations (BSEs) of the form
\begin{equation}
\label{eqi.4}
Y_t+F_t(Y,M)+M_t=\xi+F_T(Y,M)+M_T,\quad t\in [0,T]
\end{equation}
for given generator $F$.
If we  take $F_t(Y,M)=\int_0^tf(r,Y_r,M)\,dr$, then  the above BSE becomes BSDE \eqref{eqi.1}. One of the results
obtained in \cite[Proposition 3.7]{CN} applies to BSDEs  \eqref{eqi.1} with $f$ given by \eqref{eqi.0}, and   $h$ of a special form. However, a  close inspection of the proof of \cite[Proposition 3.7]{CN} reveals that it in fact applies not only to $h$
specified therein but to any $h$ satisfying \eqref{eqi.2} and  the following additional condition:
\begin{equation}
\label{eqi.6}
\mbox{if}\,\,\, M,N\in\MM^{2}_0(0,T;\BR^l)\,\,\,\mbox{are strongly orthogonal, then}\,\,\, h(M+N)=h(M)+h(N).
\end{equation}
Therefore, from \cite[Proposition 3.7]{CN} it follows that  if $g$ is Lipschitz continuous with respect to the second and the third variable (uniformly in $t$), and $h$ satisfies \eqref{eqi.2},\eqref{eqi.6}, then there exists a {\em global solution} to \eqref{eqi.1}.
Observe that operator $h_1$ satisfies the latter condition. Thereby,  it does not support the conjecture stated in \cite{LLQ}.

The main result of the present paper (Theorem \ref{th3}) disproves the conjecture formulated in \cite{LLQ}, and shows that in fact  condition \eqref{eqi.2} and Lipschitz continuity of  driver $g$ with respect to the second and the third variable (uniformly in $t$)  guarantee the existence
of a unique {\em global solution} to \eqref{eqi.1} with $f$ of the form \eqref{eqi.0}. In fact, we shall prove our existence and uniqueness result of  {\em global solutions}  under considerable   weaker  hypotheses, and
in general framework \eqref{eqi.1}.   
We assume that $f$ is merely  continuous monotone in the second variable, i.e. for some $\mu\in\BR$, and any  $t\in [0,T]$, 
\[
\langle f(t,y_1,M)-f(t,y_2,M),y_1-y_2\rangle\le \mu |y_1-y_2|^2,\quad y_1,y_2\in \mathbb R^l,\, M\in \MM^{2}_0(0,T;\BR^l).
\]
Moreover, the growth of $f$ with respect to $y$ is subject to no restriction except the local behavior: for any $R>0$, 
\[
E\Big(\int_0^T\sup_{|y|\le R}|f(t,y,0)|\,dt\Big)^2<\infty.
\]
Instead of  \eqref{eqi.2}, which only fits into the framework \eqref{eqi.0}, we consider a more general condition
of the following form: for any $t\in [0,T]$, $y\in\mathbb R^l$,
\begin{equation}
\label{eqi.7}
E\int_t^T |f(r,y,M)-f(r,y,M')|^2\,dr\le\lambda E\int_t^T\,d[M-M']_r,\quad M, M'\in\MM^2_0(0,T;\BR^l),
\end{equation}
which holds in particular  when $f$ is given by \eqref{eqi.0}, $h$  satisfies \eqref{eqi.2}, and $g$ is Lipschitz continuous with respect to $h(M)_t$.
When $\mathbb F$ is generated by a Brownian motion $B$, $f$ is given by \eqref{eqi.0}, and $h(M)=Z^M$,
where $Z^M$ is a unique progressively measurable process such that $M_t=\int_0^tZ_r^M\,dB_r,\, t\in [0,T]$, then our assumptions
agree with the ones  considered in \cite{BDHPS} (with $p=2$ in \cite{BDHPS}; see also \cite[Section 6]{K:SPA}, where reflected BSDEs on general filtered spaces are considered). 

The organization of the paper is as follows. In Section \ref{sec2} we introduce the basic notation and hypotheses.
Section \ref{sec3} is devoted to presenting several examples of driver $f$ satisfying \eqref{eqi.7}.  
They  show that the approach and  framework 
considered in the present  paper provide a unified way of treating a wide variety of seemingly disparate classes of BSDEs.
The  definition of a solution and a priori estimates  are contained in Section \ref{sec4}. In Section \ref{sec5} we give an existence and uniqueness result for BSDEs with a driver independent of $M$.  Finally, Section \ref{sec6} contains the proof of the paper's main result. In Section \ref{sec7}, we demonstrate  that the proof technique used in Section \ref{sec6} may also be utilized to BSDEs
of the form
\begin{equation}\label{eqi.101}
Y_t=\xi+\int^T_t f(r,Y,M)\,dr-\int^T_t\,dM_r,\quad t\in[0,T],
\end{equation} 
where now the driver may also depend   on the future evaluation of $Y$. We prove
that under the following Lipschitz type condition:
\begin{enumerate}
\item[(A)] There exists $L>0$ such that for any $t\in [0,T]$, $M,M'\in \MM^2_0(0,T;\BR^l)$, $Y,Y'\in \mathcal S^2(0,T;\BR^l)$
\[
E\int_t^T|f(r,Y,M)-f(r,Y',M')|^2\,dr\le L(E\int_t^T\,d[M-M']_r+E\int_t^T|Y_r-Y'_r|^2\,dr),
\]
\end{enumerate}
and standard integrability assumptions on the data, there exists a unique {\em global solution} to \eqref{eqi.101}.

\section{Preliminaries}
\label{sec2}
\subsection{Basic notation}
Let $l\in\mathbb{N}$. For $x\in\mathbb{R}^l$, $|x|$ denotes the Euclidean norm and $\langle\cdot,\cdot\rangle$
the natural inner product in $\BR^l$. We set $\sgn(x)=\mathbf{1}_{|x|\neq 0}\,x/|x|$. Let $(\mathcal X,\rho)$ be a separable metric linear space. We denote by $L^{2}(\Omega,P;\mathcal X)$ the set of $\FF_T$-measurable $\mathcal X$-valued random variables $X$ such that
\[
E\rho^2(X,0)<\infty.
\]
We let
\[
\rho_{L^2}(X^1,X^2):= \big(E\rho^2(X^1,X^2)\big)^{1/2},
\]
and if $\mathcal X$ is a normed space, then we let $\|\cdot\|_{L^2}$ denote the norm generated by $\rho_{L^2}$.
By $\mathcal{S}^{2}_{\mathbb F}(0,T; \mathcal X)$, we denote the set of all $\mathcal X$-valued $\mathbb{F}$-progressively measurable processes $Y=(Y_t)_{t\in [0,T]}$, such that 
\[
E\sup_{0\le t\le T}\rho^2(Y_t,0)<\infty.
\]
We set
\[
\rho_{\mathcal{S}^{2}_{\mathbb F}}(Y^1,Y^2):= \big(E\sup_{0\le t\le T}\rho^2(Y^1_t,Y^2_t)\big)^{1/2}
\]
If $\mathcal X$ is a normed space, then by $\|\cdot\|_{\mathcal S^2_{\mathbb F}}$ we denote the norm generated by $\rho_{\mathcal S^2_{\mathbb F}}$.
 We let $\mathcal{M}_{loc}(0,T;\mathbb R^l)$  denote the set of all c\`adl\`ag processes  $M=(M^1_t,\dots,M^l)_{t\in[0,T]}$
 such that for any $i\in \{1,\dots,l\}$ process $M^i$ is a local $\mathbb{F}$-martingale.  By $\mathcal{M}^{2}(0,T;\BR^l)$ we denote the subset of $\mathcal{M}_{loc}(0,T;\mathbb R^l)$ consisting of processes $M$ such that 
 \[
\|M\|_{\MM^{2}}:=(E[M]_T)^{1/2}<\infty,
 \] 
where $[M]_T=\sum_{i=1}^l[M^i]_T$ and $[M^i]$ is the {\em square bracket} of $M^i,\, i=1,\dots,l$. 
By $\MM^{2}_{0,loc}(0,T;\mathbb R^l)$ (resp. $\MM^{2}_0(0,T;\BR^l)$) we denote the subspace of  $\MM^{2}_{loc}(0,T;\BR^l)$ (resp. $\MM^{2}(0,T;\BR^l)$)
consisting of those processes $M$ for which $M_0=0$. 
We let  $\mathcal{H}^2_{\mathbb F}(0,T;\mathcal X)$ be the set of all 
$\mathbb{F}$-progressively measurable 
 processes $X=(X_t)_{t\in[0,T]}$ taking values in $\mathcal X$, such that
\[
E\int^T_0\rho^2(X_r,0)\,dr<\infty.
\] 
We let
\[
\rho_{\HH^2_\mathbb F}(X^1,X^2):= \Big(E\int^T_0\rho^2(X^1_r,X^2_r)\,dr\Big)^{1/2}.
\]  
When $\mathcal X$ is a normed space, then by $\|\cdot\|_{\HH^2_{\mathbb F}}$ we denote the norm generated by $\rho_{\HH^2_{\mathbb F}}$.
By $B_{\mathbb F}(0,T; L^2(\Omega,P;\BR^l))$, we denote the set of all $\mathbb{F}$-adapted   $\mathbb R^l$-valued processes $V=(V_t)_{t\in[0,T]}$, such that
\[
\|V\|_{B_{\mathbb F}}:= \sup_{0\le t\le T}\big(E|V_t|^2\big)^{1/2}<\infty.
\]

In the whole paper all relations between  random variables hold
$P$-a.s. For $\mathbb{R}^l$-valued processes $X$, $Y$ we  write  $X_t=Y_t,\, t\in [0,T]$ (or simply $X=Y$) iff 
\[
P(\exists_{t\in [0,T]}\,\, X_t\neq Y_t)=0.
\]
For given $d\in\mathbb N$ we let $\ell^d$ denote $d$-dimensional Lebesgue measure on $\BR^d$.

\subsection{Assumptions on the data}

The generator (driver) is the mapping
\[
f: \Omega\times[0,T]\times\mathbb{R}^l\times \MM^{2}_0(0,T;\BR^l) \to\mathbb{R}^l,
\]
which is $\mathbb{F}$-adapted for  fixed $y\in\BR^l, M\in\MM^2_0(0,T;\BR^l)$.

We shall need the following hypotheses:
\begin{enumerate}
\item[(H1)]$E|\xi|^2<\infty,\, E(\int_0^T|f(r,0,0)|\,dr)^2<\infty$,
\item[(H2)] There exists  $\mu\in\R$ such that  for any $t\in[0,T]$, $y,y'\in\R^l$, $M\in\MM^{2}_0(0,T;\BR^l)$,
\[
\langle y-y',f(t,y,M)-f(t,y',M)\rangle \leq\mu|y-y'|^2,
\]
\item[(H3)] There exists  $\lambda >0$ such that for any $t\in[0,T]$, $y\in\R^l$, $M,M'\in \MM^{2}_0(0,T;\BR^l)$,
\[
E\int_t^T |f(r,y,M)-f(r,y,M')|^2\,dr\le\lambda E\int_t^T\,d[M-M']_r,
\]
\item[(H4)] For every $(t,M)\in[0,T]\times \MM^{2}_0(0,T;\BR^l)$ the
mapping $\mathbb{R}^l\ni y\rightarrow f(t,y,M)$ is continuous $P$-a.s.,
\item[(H5)] For each $r>0$, 
\[
E\big(\int^T_0 \psi_r(t)\,dt\big)^2<\infty,
\]
where $\psi_r(t)=\sup_{|y|<r}|f(t,y,0)-f(t,0,0)|$.
\end{enumerate}

\section{Examples of generators satisfying (H3)}\label{sec3}

In this section we shall give several examples of generators satisfying (H3). They  show that the approach and  framework 
considered in the present  paper provides a unified way of treating a wide variety of seemingly disparate classes of BSDEs.
We start with  generators of the form \eqref{eqi.0} (Examples \ref{ex.1}--\ref{ex.5a}). Then BSDE \eqref{eqi.1} is of the following form 
\begin{equation}\label{eqi.1a}
Y_t=\xi+\int^T_t g(r,Y_r,h(M)_r)\,dr-\int^T_t\,dM_r,\quad t\in[0,T].
\end{equation}
In fact we consider slightly  more general case, when $\BR^m$ is replaced by the metric space $\mathcal X$.
In this case Lipschitz continuity of $g$ with respect to the third variable and condition \eqref{eqi.2}, which now admits the following form
\begin{equation}
\label{eqi.2a}
E\int_t^T\rho^2(h(M)_r,h(N)_r)\,dr\le \gamma_h E\int_t^T \, d[M-N]_r,\quad  M,N\in   \MM^{2}_0(0,T;\BR^l),
\end{equation}
imply (H3). In Example \ref{ex.6} we show that more general than \eqref{eqi.1a} framework \eqref{eqi.1} allows us in particular to
cover a class of McKean-Vlasov-type equations.

\begin{example}[Classical BSDEs]
\label{ex.1}
In the pioneering paper \cite{PP} Pardoux and Peng introduced the notion of BSDEs, with a filtration $\mathbb F$
that was assumed to be generated by a given $d$-dimensional Brownian motion $B$, of the following form
\begin{equation}
\label{eq.ex1}
Y_t=\xi+\int_t^Tf(r,Y_r,Z_r)\,dr-\int_t^T Z_r\,dB_r,\quad t\in [0,T].
\end{equation}
A solution of equation \eqref{eq.ex1} consists of a continuous  process $Y\in \mathcal S^2_{\mathbb F}(0,T;\BR^l)$, and 
$Z\in \HH^2_{\mathbb F}(0,T; \BR^l\otimes\BR^d)$. Observe that BSDE \eqref{eqi.1a} becomes BSDE \eqref{eq.ex1}
if we consider $\mathbb F$ as specified above, $\mathcal X=\BR^d$ and 
\[
h(M)=Z^M,
\]
where $Z^M$
is a unique process in $\HH^2_{\mathbb F}(0,T; \BR^l\otimes\BR^d)$ such that $M_t=\int_0^t Z^M_r\,dB_r,\, t\in [0,T]$.
Clearly, $h$ satisfies \eqref{eqi.2a}.
\end{example}

\begin{example}[L\'evy processes and Teugels martingales]
\label{ex.2}
In \cite{NS} Nualart and Schoutens considered filtration $\mathbb F$ generated by a $d$-dimensional L\'evy process $(X_t)_{t\ge 0}$
with the L\'evy triplet $(a,\sigma,\nu)$. They  assumed additionally  that for some $\alpha>0$ and  any $\varepsilon>0$,
\[
\int_{\BR\setminus(-\varepsilon,\varepsilon)}e^{\alpha|x|}\,\nu(dx)<\infty.
\]
Then there exists an orthonormal  sequence $(H^i)_{i\ge 0}\subset \MM^2(0,T;\BR^l)$, built up 
from   Teugels martingales, that furnishes the following representation property: for any  $M\in\MM_0^2(0,T;\BR^l)$ there exists a unique sequence $(Z^{M,i})_{i\ge 1}\subset  \HH^2_{\mathbb F}(0,T;\BR^l\otimes\BR^d)$ such that
\[
M_t=\sum_{i=1}^\infty \int_0^t Z^{M,i}_r\,dH^i_r,\quad t\in [0,T]. 
\]
Based on this representation the authors in \cite{NS} introduced and studied  BSDEs of the  form
\begin{equation}
\label{eq.ex2}
Y_t=\xi+\int_t^Tf(r,Y_r,(Z^i_r)_{i\ge 1})\,dr-\sum_{i=1}^\infty\int_t^T Z^i_r\,dH^i_r,\quad t\in [0,T].
\end{equation}
Observe that equation \eqref{eqi.1a} becomes \eqref{eq.ex2} if we take $\mathcal X=l^2:=\{(a_n)_{n\ge 1}: \|(a_n)_{n\ge 1}\|^2_{l^2}:=\sum_{n=1}^\infty a_n^2<\infty\}$, and 
\[
h(M)=(Z^{M,i})_{i\ge 1}.
\]
One easily checks that $h$ fulfills \eqref{eqi.2a}.
\end{example}

\begin{remark}
A more advanced  model of the above type on general filtered spaces was considered in \cite[Section 6]{K:SPA}
for reflected BSDEs (see also \cite{CE} for BSDEs).
\end{remark}

\begin{example}[Poisson random measure]
\label{ex.3}
Assume that $B$ is a $d$-dimensional Brownian motion with respect to $\mathbb F$ and $N$ is a $d$-dimensional Poisson 
random measure on $E:=\BR^m\setminus\{0\}$ (independent of $B$), with an intensity
measure $\ell^1\otimes \nu$ ($\nu=(\nu_1,\dots,\nu_d)$) satisfying
\[
\sum_{i=1}^d\int_E1\wedge |x|^2\,\nu_i(dx)<\infty,
\]
such that for any $B\in\mathcal B(E)$, $\tilde N([0,t],A):=N([0,t],A)-t\nu(A)$ is an $\BR^d$-valued martingale with respect to $\mathbb F$.
In this case any martingale $M\in\MM^2_0(0,T;\BR^l)$ admits a unique representation (see e.g. \cite[Section III.4]{JS})
\[
M_t=\int_0^tZ^M_r\,dB_r+\int_0^t\int_E U_r(x)\,\tilde N(dr,dx)+L^M_t,\quad t\in [0,T], 
\]
where $Z^M\in\HH^2_{\mathbb F}(0,T; \BR^l\otimes\BR^d)$, $U\in \HH^2_{\mathbb F}(0,T; L^2(E,\nu;\BR^l\otimes\BR^d))$, and $L^M\in\MM^2_0(0,T;\BR^l)$
is strongly orthogonal to $B$ and $\tilde N$. There are numerous results in the literature (see e.g. \cite{BBP,KP,QS,Royer,Situ,YM}) concerning BSDEs of the form
\begin{equation}
\label{eq.ex3}
Y_t=\xi+\int_t^Tf(r,Y_r,Z_r,U_r)\,dr-\int_t^TZ_r\,dB_r-\int_t^T\int_EU_r(x)\,\tilde N(dr,dx),\quad t\in [0,T].
\end{equation}
This equation fits into our framework again. To see this take $\mathcal X=\BR^d\times L^2(E,\nu;\BR^l\otimes\BR^d)$
with natural metric $\rho$, and
\begin{equation}
\label{eq.ex.rep}
h(M)=(Z^M,U^M).
\end{equation}
One easily checks that \eqref{eqi.2a} holds.
\end{example}

\begin{example}[Carr\'e du champs operators]
\label{ex.4}
One of the major advantage  of BSDE's theory is that it provides an easy path for proving
probabilistic interpretation of solutions to a class of semilinear PDEs.
Let $\mathbb F$ be  the filtration generated by a Markov process $(X_t)_{t\ge 0}$, say Feller process, associated  to
Feller generator $(A,D(A))$ on $\BR^d$ with $C_c^\infty(\BR^d)\subset D(A)$. It is well known that any such operator is of the form
\begin{align*}
Au(x)=\sum_{i,j=1}^d q_{i,j}(x)u_{x_i x_j}(x)&+\sum_{i=1}^d b_i(x)u_{x_i}(x)+c(x)u(x)\\&
+\int_{\BR^d}\big(u(x+y)-u(x)-\mathbf{1}_{\{|y|\le 1\}}\langle \nabla u(x),y\rangle\big)\,N(x,dy),
\end{align*}
with $x$-dependent L\'evy triplet $(b(x),Q(x), N(x,dy))$, and $c\le 0$ (see e.g. \cite{BSW}).
When $(X_t)_{t\ge 0}$ is a diffusion process, i.e. $N\equiv 0$, then solutions to the Cauchy problem
\begin{equation} 
\label{eq.ex4}
-\partial_t u-Au=g(t,x,u,\sigma\nabla u),\quad u(T,\cdot)=\varphi,
\end{equation}
where $\sigma\cdot \sigma^T=Q$,
are related to BSDEs of the form \eqref{eq.ex1} with $\xi=\varphi(X_T)$ and $f(r,y,z)=\hat f(r,X_r,y,z)$ (see e.g. \cite{BPS,Pardoux,K:JFA,Pardoux1,R:PTRF}).
However, for general operator $A$ it is more natural to consider $\sqrt{\Gamma(u)}$ in place of $\sigma\nabla u$ in the nonlinear part
of equation \eqref{eq.ex4}, where $\Gamma$ is the so called  {\bf carr\'e du champs operator}:
\[
\Gamma(u):= A(u^2)-2uAu,\quad u\in D(A).
\]
Solutions of such equations are related to  BSDE \eqref{eqi.1a}, where $\xi,f$ are  as in the foregoing, and
\[
h(M)_r=\sqrt{\frac{d\langle M\rangle_r}{dr}}
\]
(see e.g. \cite{BR}). Observe that \eqref{eqi.2a} is satisfied once again.
\end{example}

In all the above examples operator $h$ fulfills {\em local-in-time property} \eqref{eqi.litp}. Furthermore, it satisfies condition \eqref{eqi.6}. Now, we shall give  simple examples of operator $h$  which fulfills \eqref{eqi.2}, and at the same time  does not satisfy \eqref{eqi.litp} nor \eqref{eqi.6}. In the sequel we denote by $D(0,T;\BR^l)$ the set of all c\`adl\`ag $\BR^l$-valued functions on $[0,T]$. %

\begin{example}[Beyond local-in-time property; path dependent BSDEs]
\label{ex.5}
Consider a functional  $\phi:D(0,T;\BR^l)\rightarrow \BR$ such that for some $L>0$ and any $a,b\in D(0,T;\BR^l)$,
\[
|\phi(a)-\phi(b)|\le L\sup_{t\in [0,T]}|a(t)-b(t)|.
\]
Then operator $h:\MM^2_0(0,T;\BR^l)\rightarrow \HH^2_{\mathbb F}(0,T;\BR)$, defined by
\[
h(M)_t=E(\phi(M_{\cdot\wedge t}-M_t)|\FF_t),
\]
satisfies \eqref{eqi.2}, however  in general it  does not satisfy \eqref{eqi.litp} nor \eqref{eqi.6} (take e.g. $\phi(a)=\sup_{t\in [0,T]}|a(t)|$).
\end{example}

\begin{example}[Beyond local-in-time property; anticipated BSDEs]
\label{ex.5a}
Let $\zeta,\delta:[0,T]\rightarrow \BR^+$ be continuous functions. With  the notation of Example \ref{ex.3}, we set
\[
h(M)_t=(E(Z^M_{t+\zeta(t)}|\FF_t),E(U^M_{t+\delta(t)}|\FF_t)),\quad t\in [0,T],
\]
where $(Z^M_t,U^M_t)=(\eta_t,\hat\eta_t),\, t\ge T$ for given stochastic processes $\eta,\hat\eta$.
BSDEs of the form  \eqref{eqi.1a} with  $h$ as above were introduced (for Brownian filtration) by Peng and Yang in \cite{PY}.
Observe that $h$ satisfies \eqref{eqi.2a} and does not satisfy \eqref{eqi.litp}. If we consider in the definition of $h$
nonlinear expectation (see \cite{Peng}), then condition \eqref{eqi.6} will be also violated. 
\end{example}

The last example exhibits the advantage of the general framework \eqref{eqi.1} over the framework \eqref{eqi.1a}.
By $\mathcal P_1(\mathcal X)$ we denote the set of all probability measures $\mu$ on $\mathcal X$ for which
\[
\int_{\mathcal X}\rho(x,0)\,\mu(dx)<\infty.
\]
We let $W_{\mathcal X}$ denote the {\em Wasserstein metric} on $\mathcal P_1(\mathcal X)$:
\begin{align*}
W_{\mathcal X}(\mu,\nu)=\inf\{\int_{\mathcal X\times \mathcal X}\rho(x,y)\,P(dx,dy): P\in\PP_{\mu,\nu}(\mathcal X\times\mathcal X)\},
\end{align*}
where $\PP_{\mu,\nu}(\mathcal X\times\mathcal X)=\{P\in \PP_1(\mathcal X)\times \PP_1(\mathcal X):  P(dx,\mathcal X)=\mu(dx),\, P(\mathcal X,dy)=\nu(dy)\}$.

\begin{example}[McKean-Vlasov-type equations]
\label{ex.6}
Let $\mathcal X=D(0,T;\BR^l)$, and $\rho$ be the Skorokhod metric on $D(0,T;\BR^l)$.
Consider a function 
\[
\hat f:\Omega\times [0,T]\times \BR^l\times\PP_1(\mathcal X)\to\BR^l
\]
such that for some $L>0$ and any $t\in [0,T]$, $y\in\BR^l$,
\[
|\hat f(t,y,\mu)-\hat f(t,y,\nu)|\le L W_{\mathcal X}(\mu,\nu),\quad \mu,\nu \in\PP_1(\mathcal X).
\]
For given martingale $M\in\MM^{2}_0(0,T;\BR^l)$, and $t\in [0,T]$ we denote $M^t=M_{\cdot\wedge t}-M_t$.
We set
\[
f(t,y,M):=\hat f(t,y,\mathcal L(M^t)),\quad t\in [0,T], y\in\BR^l, M\in\MM^2_0(0,T;\BR^l).
\]
Observe that
\[
|f(r,y,M)-f(r,y,N)|\le LW_{\mathcal X}(\mathcal L(M^r),\mathcal L(N^r))\le LE\rho(M^r,N^r)\le LE\sup_{s\in [r,T]}|M^r_s-N^r_s|.
\]
Therefore, by the Burkholder-Davis-Gundy inequality, we  easily obtain (H3). We may also get  an interesting example 
of generator $f$ by taking $\mathcal X= (\BR^l\otimes\BR^d)\times L^2(E,\nu;\BR^l)$ (cf. \cite{BLP,CN}), and natural metric $\rho$
on $\mathcal X$ (under assumptions and notation of Example \ref{ex.3}).
Then, by using representation \eqref{eq.ex.rep}, we may define
\[
f(t,y,M):=\hat f(t,y,\mathcal L(Z^M_t,U^M_t)),\quad t\in [0,T], y\in\BR^l, M\in\MM^2_0(0,T;\BR^l).
\]
One easily checks that $f$ satisfies (H3).
\end{example}

\section{Definition of a solution to BSDE and a priori estimates}
\label{sec4}

\begin{definition}\label{def1}
We say that a pair $(Y,M)$ of $\mathbb{F}$-adapted processes is a solution of the backward stochastic differential equation with right-hand side $f$ and terminal value $\xi$ (BSDE($\xi,f$) for short) if
\begin{enumerate}
\item[(a)] $Y$ is c\`adl\`ag and $M\in\mathcal{M}^{2}_0(0,T;\BR^l)$,
\item[(b)] $\int^T_0|f(r,Y_r,M)|\,dr<+\infty$,
\item[(c)] $Y_t=\xi+\int^T_t f(r,Y_r,M)\,dr-\int^T_t\,dM_r,\quad t\in[0,T].$
\end{enumerate}
\end{definition}

\begin{proposition}\label{prop1}
Assume that \textnormal{(H1)--(H3)} are in force. Then there exists $C>0$, depending only on $T,\mu^+,\lambda$, such that  for any solution $(Y,M)\in\sol$ to  \textnormal{BSDE}$(\xi,f)$,
\begin{equation*}
E\sup_{0\le t\le T}|Y_t|^2+E\int^T_0 \,d[M]_r\le CE\Bigg(|\xi|^2+\Big(\int^T_0 |f(r,0,0)|\,dr\Big)^2\Bigg).
\end{equation*}
\end{proposition}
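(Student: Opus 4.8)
The plan is to run an energy estimate based on Itô's formula for $|Y|^2$, controlling $\E\sup_{t\le s\le T}|Y_s|^2$ and $\E\int_t^T d[M]_r$ simultaneously, and then to close with a backward Gronwall argument. First I would apply Itô's formula to $|Y_\cdot|^2$ on $[t,T]$; since $Y-M$ equals the continuous finite-variation drift $\int_\cdot^T f\,dr$, one has $[Y]=[M]$, whence
\[
|Y_t|^2+\int_t^T d[M]_r=|\xi|^2+2\int_t^T\langle Y_r,f(r,Y_r,M)\rangle\,dr-2\int_t^T\langle Y_{r-},dM_r\rangle .
\]
Because the data lie only in $L^2$, I would first localize by stopping times $\tau_n\uparrow T$ so that the stochastic integral is a genuine martingale with vanishing expectation and every term is integrable, carry out the estimates on $[t\wedge\tau_n,\tau_n]$, and recover the claim by Fatou/monotone convergence as $n\to\infty$; this is routine and I would not dwell on it.

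The heart of the matter is the drift. Using (H2) with $y'=0$ gives $\langle Y_r,f(r,Y_r,M)\rangle\le\mu^+|Y_r|^2+\langle Y_r,f(r,0,M)\rangle$, and I would split $f(r,0,M)=f(r,0,0)+\big(f(r,0,M)-f(r,0,0)\big)$. The difference is handled by Young's inequality with weight $2\la$ together with (H3) (applied with $M'=0$, $y=0$): this produces $2\la\int_t^T\E|Y_r|^2\,dr$ plus $\tfrac12\E\int_t^T d[M]_r$, the latter being absorbed into the left-hand side. The genuinely delicate point is the inhomogeneity $f(\cdot,0,0)$: hypothesis (H1) controls only $\E\big(\int_0^T|f(r,0,0)|\,dr\big)^2$, not $\E\int_0^T|f(r,0,0)|^2\,dr$, so one cannot dominate it by a pointwise Young inequality against $|Y_r|^2$. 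Instead I would write $2\int_t^T|Y_r|\,|f(r,0,0)|\,dr\le \eta\,(Y^*_t)^2+\tfrac1\eta\big(\int_0^T|f(r,0,0)|\,dr\big)^2$ with $Y^*_t:=\sup_{t\le s\le T}|Y_s|$, which unavoidably brings the running supremum $\psi(t):=\E(Y^*_t)^2$ into the estimate.

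Consequently I would run two coupled inequalities. Taking expectations in the Itô identity (where the martingale drops out) gives an estimate of the form
\[
\E|Y_t|^2+\tfrac12\,\beta(t)\le \E|\xi|^2+c_1\int_t^T\E|Y_r|^2\,dr+\eta\,\psi(t)+\tfrac1\eta\,\Theta,
\]
with $\beta(t):=\E\int_t^T d[M]_r$, $\Theta:=\E\big(\int_0^T|f(r,0,0)|\,dr\big)^2$ and $c_1=2\mu^++2\la$. Separately, taking the supremum over the starting time before expectations and applying the Burkholder--Davis--Gundy inequality to $\sup_{u}|\int_u^T\langle Y_{r-},dM_r\rangle|$, together with $\E\big(Y^*_t(\int_t^T d[M]_r)^{1/2}\big)\le\psi(t)^{1/2}\beta(t)^{1/2}$ and Young's inequality, yields a companion estimate for $\psi(t)$ involving $\psi(t)$, $\beta(t)$ and $\int_t^T\E|Y_r|^2\,dr$. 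The main obstacle is that the BDG constant $c$ need not be small, so a naive absorption of $\psi(t)$ and $\beta(t)$ fails; I would resolve this by tuning the Young weights (taking $\eta$ small, depending on $c$, and a fixed weight in the BDG splitting) so that, after substituting the $\beta$-estimate into the $\psi$-estimate, the coefficient of $\psi(t)$ on the right stays strictly below $1$.

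This leaves an inequality
\[
\psi(t)\le K_1\big(\E|\xi|^2+\Theta\big)+K_2\int_t^T\psi(r)\,dr,
\]
with $K_1,K_2$ depending only on $\mu^+,\la$ and the universal BDG constant, where I have used $\E|Y_r|^2\le\psi(r)$. Since $\psi$ is finite for a solution in $\sol$, the backward Gronwall lemma gives $\psi(t)\le K_1\big(\E|\xi|^2+\Theta\big)e^{K_2(T-t)}$; feeding this back into the $\beta$-estimate at $t=0$ then bounds $\E[M]_T$ as well, and the two bounds together yield the assertion with $C$ depending only on $T,\mu^+,\la$.
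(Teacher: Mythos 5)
Your proposal is correct and follows essentially the same route as the paper's proof: It\^o's formula for $|Y|^2$, monotonicity (H2) together with (H3) and a Young weight tuned so that the resulting bracket term is absorbed into the left-hand side, the key pairing of $\sup_{t}|Y_t|$ against $\int_0^T|f(r,0,0)|\,dr$ via Young's inequality (the right way to handle an inhomogeneity that is only $L^1$ in time), the Burkholder--Davis--Gundy inequality for the supremum, and Gronwall's lemma. The remaining differences are organizational rather than substantive: the paper freezes the (H3)-difference at $y=Y_r$ (rather than at $y=0$, as you do --- your variant has the mild advantage of invoking (H3) only at a deterministic point, exactly as stated), it runs Gronwall on $E|Y_t|^2$ first and performs the sup-absorption once at the very end instead of carrying the coupled quantities $\psi(t),\beta(t)$ throughout, and it dispenses with your localization step by simply observing that for $(Y,M)\in\sol$ the integral $\int_0^\cdot\langle Y_{r-},dM_r\rangle$ is already a true martingale.
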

\begin{proof}
All the constants in the proof labelled  $C_i$ for some $i\in\mathbb N$, shall  depend only on $\lambda,\mu^+,T$.
By It\^o's formula
\begin{equation}\label{prop1.1}
|Y_t|^2+\int^T_t \,d[M]_r= |\xi|^2+2\int^T_t \langle Y_r, f(r,Y_r,M)\rangle\,dr-2\int^T_t \langle Y_{r-}, dM_r\rangle. 
\end{equation}
By (H2)
\begin{equation}\label{prop1.2}
\begin{split}
&\langle Y_r, f(r,Y_r,M)\rangle\le   |Y_r||f(r,Y_r,M)-f(r,Y_r,0)|+\mu^+|Y_r|^2+|Y_r||f(r,0,0)|\\
&\quad\le \frac12(1+\nu)|Y_r|^2+\frac{1}{2(1+\nu)}|f(r,Y_r,M)-f(r,Y_r,0)|^2+\mu^+|Y_r|^2+|Y_r||f(r,0,0)|,
\end{split}
\end{equation}
for every $\nu\ge 0$. 
From   (\ref{prop1.1}), (\ref{prop1.2}) we get
\begin{equation}
\label{eq3.waog12}
\begin{split}
|Y_t|^2&+\int^T_t \,d[M]_r\le |\xi|^2+(1+\nu+2\mu^+)\int^T_t |Y_r|^2\,dr\\&
+\frac{1}{1+\nu}\int^T_t |f(r,Y_r,M)-f(r,Y_r,0)|^2\,dr\\&+2\int^T_t |Y_r||f(r,0,0)|\,dr-2\int^T_t \langle Y_{r-}, dM_r\rangle. 
\end{split}
\end{equation}
Since $Y\in\mathcal S^{2}(0,T;\BR^l)$, and $M\in\MM^{2}_0(0,T;\BR^l)$, we have that $\int_0^\cdot \langle Y_{r-}, dM_r\rangle$
is a martingale. 
Thus, taking the expectation, and using (H3), we obtain
\begin{equation}\label{prop1.4}
\begin{split}
E|Y_t|^2&+E\int^T_t \,d[M]_r\le E|\xi|^2+(1+\nu+2\mu^+)E\int^T_t |Y_r|^2\,dr\\
&+\frac{\lambda}{1+\nu}E\int^T_t \,d[M]_r+2E\int^T_t |Y_r||f(r,0,0)|\,dr.
\end{split}
\end{equation}
Hence, for  $\nu:=2\lambda$, we get
\begin{equation}\label{prop1.4waog}
\begin{split}
E|Y_t|^2+\frac12E\int^T_t \,d[M]_r&\le E|\xi|^2+(1+\nu+2\mu^+)E\int^T_t |Y_r|^2\,dr\\&\quad +2E\int^T_0 |Y_r||f(r,0,0)|\,dr.
\end{split}
\end{equation}
Set $\beta_\nu:= (1+\nu+2\mu^+)$. By the Gronwall lemma
\[
E|Y_t|^2\le e^{\beta_\nu T}(E|\xi|^2+2E\int^T_0 |Y_r||f(r,0,0)|\,dr).
\]
This combined with (\ref{prop1.4waog}) yields
\begin{equation}\label{prop1.5}
\begin{split}
E|Y_t|^2+E\int^T_0 \,d[M]_r\le C_1EX,
\end{split}
\end{equation}
for some $C_1>0$, where $X:=|\xi|^2+2\int^T_0 |Y_r||f(r,0,0)|\,dr$. From (\ref{eq3.waog12}), (H3), (\ref{prop1.5}), and the Burkholder-Davis-Gundy inequality,
we conclude that for any $\beta>0$,
\begin{equation}\label{prop1.6}
\begin{split}
E\sup_{0\le t\le T}|Y_t|^2&\le \frac{\lambda}{1+\nu}E\int_0^T\,d[M]_r\,dr+C_2EX+2E\sup_{0\le t\le T}\Big|\int^T_t \langle Y_{r-}, dM_r\rangle\Big|\\&\le \frac{\lambda}{1+\nu}E\int_0^T\,d[M]_r\,dr+ C_2EX+C_3E\sup_{0\le t\le T}|Y_t|\left(\int^T_0 \,d[M]_r\right)^{\frac{1}{2}}\\
&\le (\frac12+C_3\beta^{-1})E\int_0^T\,d[M]_r\,dr+
C_2EX+C_3\beta E\sup_{0\le t\le T}|Y_t|^2\\&\le \big[(\frac12+C_3\beta^{-1})C_1+C_2\big] EX+C_3\beta E\sup_{0\le t\le T}|Y_t|^2.
\end{split} 
\end{equation}
Taking $\beta=\frac{1}{2C_3}$ we get that for some $C_4>0$,
\begin{equation}\label{prop1.7}
\begin{split}
E\sup_{0\le t\le T}|Y_t|^2+E\int^T_0 \,d[M]_r\le C_4EX
\end{split}
\end{equation}
Observe that for any $\alpha>0$,
\begin{equation*}
EX\le \alpha E\sup_{0\le t\le T}|Y_t|^2+\alpha^{-1}E\Bigg(|\xi|^2+\Big(\int^T_0 |f(r,0,0)|\,dr\Big)^2\Bigg).
\end{equation*}
Taking $\alpha=\frac{1}{2C_4}$ we conclude from  (\ref{prop1.7}) the desired inequality.
\end{proof}

When $l=1$ we are able to say something more about integrability of the driver $f$.

\begin{proposition}\label{th4}
Suppose that $l=1$. Assume that \textnormal{(H1)--(H3)}  are in force. Then there exists $C>0$, depending only on $T,\mu^+,\lambda$, such that  for any solution $(Y,M)\in\sol$ to  \textnormal{BSDE}$(\xi,f)$,
\[
E\Big(\int^T_0 |f(r,Y_r,M)|\,dr\Big)^2\le CE\Bigg(|\xi|^2+\Big(\int^T_0|f(r,0,0)|\,dr\Big)^2\Bigg).
\] 
\end{proposition}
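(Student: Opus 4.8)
The plan is to reduce the claim to a bound on the \emph{total variation of the finite-variation part} of $Y$, and to obtain that bound by exploiting one-dimensionality through Tanaka's formula. Writing $f_r:=f(r,Y_r,M)$, I would decompose
\[
f_r=\tilde\delta_r+\mu Y_r+\beta_r,\qquad \beta_r:=f(r,0,M),\quad \tilde\delta_r:=f(r,Y_r,M)-\mu Y_r-f(r,0,M).
\]
The term $\mu Y_r$ is harmless, since $\int_0^T|Y_r|\,dr\le T\sup_{r}|Y_r|$ is controlled in $L^2$ by Proposition \ref{prop1}. The term $\beta_r$ is handled by splitting $\beta_r=(f(r,0,M)-f(r,0,0))+f(r,0,0)$: Cauchy--Schwarz together with \textnormal{(H3)} (taken at $y=0$, $M'=0$) bounds $E(\int_0^T|f(r,0,M)-f(r,0,0)|\,dr)^2\le T\lambda E[M]_T$, while $E(\int_0^T|f(r,0,0)|\,dr)^2$ is given by \textnormal{(H1)}; both are dominated by the right-hand side of the assertion via Proposition \ref{prop1}. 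Thus everything reduces to estimating $E(\int_0^T|\tilde\delta_r|\,dr)^2$.

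The crux, and the place where $l=1$ is indispensable, is the following sign identity. By \textnormal{(H2)} the scalar map $y\mapsto f(r,y,M)-\mu y$ is non-increasing, so $\tilde\delta_r$ and $Y_r$ have opposite signs and
\[
\sgn(Y_r)\tilde\delta_r=-|\tilde\delta_r|\qquad\text{for a.e. }r.
\]
I would then apply the Tanaka--Meyer formula to the semimartingale $Y$ (whose continuous finite-variation part is $-\int_0^\cdot f_r\,dr$ and whose martingale part is $M$),
\[
|Y_T|=|Y_0|+\int_0^T\sgn(Y_{r-})\,dY_r+A_T,\qquad A_T\ge 0,
\]
where $A_T$ collects the local time at $0$ and the (nonnegative, by convexity of $x\mapsto|x|$) jump corrections. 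Substituting $dY_r=-f_r\,dr+dM_r$, replacing $\sgn(Y_{r-})$ by $\sgn(Y_r)$ in the $dr$-integral (legitimate since the jumps are $dr$-negligible), and using $\sgn(Y_r)f_r=-|\tilde\delta_r|+\mu|Y_r|+\sgn(Y_r)\beta_r$, I would solve for $\int_0^T|\tilde\delta_r|\,dr$ and discard the nonpositive terms $-A_T$ and $-|Y_0|$ to arrive at
\[
\int_0^T|\tilde\delta_r|\,dr\le |\xi|+\Big|\int_0^T\sgn(Y_{r-})\,dM_r\Big|+\mu^+\int_0^T|Y_r|\,dr+\int_0^T|\beta_r|\,dr.
\]

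It then remains to take $L^2$-norms. The martingale $N_t:=\int_0^t\sgn(Y_{r-})\,dM_r$ satisfies $EN_T^2=E\int_0^T\sgn(Y_{r-})^2\,d[M]_r\le E[M]_T$, which is controlled by Proposition \ref{prop1}; the remaining three terms were already seen to be $L^2$-bounded in terms of $E|\xi|^2$, $E\sup_t|Y_t|^2$, $E[M]_T$ and $E(\int_0^T|f(r,0,0)|\,dr)^2$. Adding back the contributions of $\mu Y_r$ and $\beta_r$ to recover $\int_0^T|f_r|\,dr$, and invoking Proposition \ref{prop1} to dominate $E\sup_t|Y_t|^2$ and $E[M]_T$ by $CE(|\xi|^2+(\int_0^T|f(r,0,0)|\,dr)^2)$, would yield the assertion. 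I expect the main obstacle to be structural rather than computational: the identity $\sgn(Y_r)\tilde\delta_r=-|\tilde\delta_r|$ has no multidimensional analogue, so the argument genuinely relies on $l=1$; a secondary point requiring care is that the correction term $A_T$ in the Tanaka--Meyer formula for a process with jumps must be verified to be nonnegative, since this is precisely what allows it to be dropped in the direction needed.
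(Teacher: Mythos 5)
Your proof is essentially the paper's own argument: the paper likewise applies the It\^o--Meyer (Tanaka) formula to $|Y|$, exploits the same one-dimensional sign trick coming from (H2) (the monotone part of the driver has sign opposite to $Y_r$, so multiplying by $\sgn(Y_r)$ turns it into minus its absolute value), drops the nonnegative local-time/jump corrections, controls $\int_0^\cdot\sgn(Y_{r-})\,dM_r$ in $L^2$ by the Burkholder--Davis--Gundy inequality, and concludes via (H3) and Proposition \ref{prop1}. The only difference is bookkeeping: the paper centers the monotone part at $(Y_r,0)$, i.e. works with $f(r,Y_r,0)-f(r,0,0)$ and restores the $M$-dependence through (H3) afterwards, whereas you center at $(0,M)$; this is immaterial.

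One slip must be corrected to obtain the stated dependence of the constant: carry out your decomposition with $\mu^+$ in place of $\mu$, i.e. set $\tilde\delta_r:=f(r,Y_r,M)-\mu^+Y_r-f(r,0,M)$. This is legitimate because (H2) with $\mu$ implies (H2) with $\mu^+\ge\mu$, so $y\mapsto f(r,y,M)-\mu^+y$ is still non-increasing and your sign identity $\sgn(Y_r)\tilde\delta_r=-|\tilde\delta_r|$ survives. As written, your final step (``adding back the contribution of $\mu Y_r$'' to recover $\int_0^T|f_r|\,dr$) introduces a factor $|\mu|$, so for $\mu<0$ your constant depends on $|\mu|$, whereas the proposition requires dependence on $T,\mu^+,\lambda$ only.
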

\begin{proof}
Throughout  the proof  $C$ denotes a constant which can vary from line to line but depends only on $T,\mu^+,\lambda$.
 By the It\^o-Meyer formula 
\begin{equation}\label{th4.1}
\begin{split}
|Y_t|\le |\xi|+\int^T_t \sgn(Y_r) f(r,Y_r,M)\,dr-\int^T_t \sgn(Y_{r-})\,dM_r,\quad t\in [0,T].
\end{split} 
 \end{equation}
 Hence
 \begin{equation}\label{th4.1bnm}
\begin{split}
|Y_t|\le |\xi|+\int^T_t \sgn(Y_r) f(r,Y_r,M)\,dr+\sup_{0\le t\le T}\Big|\int^T_t\sgn(Y_{r-})\,dM_r\Big|,\quad t\in [0,T].
\end{split} 
 \end{equation}
By (H2) 
 \begin{equation*}
 \begin{split}
-\sgn(Y_r) f(r,Y_r,M)&\ge -\sgn(Y_r)(f(r,Y_r,0)-f(r,0,0))+\mu^+|Y_r|\\&\quad-|f(r,Y_r,M)-f(r,Y_r,0)|-|f(r,0,0)|-\mu^+|Y_r|\\&
=\big|\sgn(Y_r) (f(r,Y_r,0)-f(r,0,0))-\mu^+|Y_r|\big|\\&\quad
-|f(r,Y_r,M)-f(r,Y_r,0)|-|f(r,0,0)|-\mu^+|Y_r|\\&\ge
\big|f(r,Y_r,0)-f(r,0,0)\big|-\mu^+|Y_r|\\&\quad
-|f(r,Y_r,M)-f(r,Y_r,0)|-|f(r,0,0)|-\mu^+|Y_r|\\&\ge
|f(r,Y_r,0)|-|f(r,Y_r,M)-f(r,Y_r,0)|-2|f(r,0,0)|-2\mu^+|Y_r|.
 \end{split}
 \end{equation*}
 From this inequality and  (\ref{th4.1bnm}) we infer that
 \begin{equation}\label{th4.2}
 \begin{split}
&\Big(\int^T_0|f(r,Y_r,0)|\,dr\Big)^2\le C\Bigg(|\xi|^2+\int^T_0|f(r,Y_r,M)-f(r,Y_r,0)|^2\,dr\\&\quad\quad
+\Big(\int^T_0|f(r,0,0)|\,dr\Big)^2+\int_0^T|Y_r|\,dr+\sup_{0\le t\le T}\Big|\int^T_t\sgn(Y_{r-})\,dM_r\Big|^2\Bigg).
 \end{split}
 \end{equation}
 Thus, applying the expectation to (\ref{th4.2}), and using  (H3) and Burkholder-Davis-Gundy inequality yields
 \begin{equation}\label{th4.4}
 \begin{split}
E\Big(\int^T_0|f(r,Y_r,0)|\,dr\Big)^2\le CE\Big(|\xi|^2+\Big(\int^T_0|f(r,0,0)|\,dr\Big)^2+\int_0^T|Y_r|\,dr+\int^T_0 d[M]_r\Big).
 \end{split}
\end{equation}
Therefore,  by  (H3)  and Proposition \ref{prop1} we get the desired inequality.
\end{proof}

\section{BSDEs with  driver independent of the martingale part }
\label{sec5}

In the present section we shall give an existence result for BSDEs in case the driver $f$
does not depend on the martingale part of a solution. The difficulty lies in the fact that
$f$ is assumed only monotone with respect to $y$ with  no assumptions on the growth. 
In the proof, among others,  we adapt and combine the techniques  applied in \cite{Pardoux1} and  \cite{BDHPS}, where the authors considered   BSDEs 
on Brownian filtration.

\begin{proposition}\label{Proposition3}
Assume that $f$ does not depend on $M$ and \textnormal{(H1),(H2),(H4),(H5)} are in force. Then there exists $(Y,M)$ - a solution to BSDE$(\xi,f)$ - such that $Y\in\mathcal{S}^{2}(0,T;\BR^l)$.
\end{proposition}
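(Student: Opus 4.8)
The plan is to obtain $(Y,M)$ as a limit of solutions of BSDEs with drivers that are Lipschitz in $y$, combining the a priori estimate of Proposition \ref{prop1} with the monotonicity (H2) and the local integrability (H5), in the spirit of \cite{Pardoux1} and \cite{BDHPS}. \emph{Step 1 (regularisation).} First I would replace $f$ by a sequence $f_n$ that is Lipschitz in $y$ for each fixed $n$ while keeping the structural hypotheses. Writing $f(t,y)=f(t,0)+\mathfrak f(t,y)$ with $\mathfrak f(t,0)=0$, by (H2) and (H4) the map $A(t,\cdot):=\mu\,\mathrm{Id}-\mathfrak f(t,\cdot)$ is a single-valued continuous maximal monotone operator on $\BR^l$ with $A(t,0)=0$, everywhere defined and progressively measurable. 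I would take its Yosida approximation $A_n:=A\circ(\mathrm{Id}+\tfrac1n A)^{-1}$ and set $f_n(t,y):=f(t,0)+\mu y-A_n(t,y)$. Then each $f_n$ is Lipschitz in $y$, satisfies (H2) with the same $\mu$, obeys $f_n(t,0)=f(t,0)$, converges to $f$ locally uniformly in $y$, and is dominated by the true growth, $|f_n(t,y)-f(t,0)|\le 2\mu^+|y|+\psi_{|y|+1}(t)$; in particular (H1) and (H5) are inherited uniformly in $n$.

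\emph{Step 2 (Lipschitz case and uniform bounds).} For Lipschitz $f_n$ independent of $M$ I would solve $\mathrm{BSDE}(\xi,f_n)$ by a fixed-point argument in $\sol$: given $U$, let $\Lambda^U_t:=E(\xi+\int_0^T f_n(r,U_r)\,dr\mid\FF_t)$, a square-integrable martingale, and put $M:=\Lambda^U-\Lambda^U_0$ and $Y_t:=\Lambda^U_t-\int_0^t f_n(r,U_r)\,dr$; one checks $(Y,M)$ solves the equation with $U$ frozen in the drift, and that $U\mapsto Y$ is a contraction in a suitably $\beta$-weighted norm on $\sol$, yielding $(Y^n,M^n)$. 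Since $f_n$ does not depend on $M$, (H3) holds with any $\lambda$ (say $\lambda=1$), so Proposition \ref{prop1} applies and bounds $E\sup_t|Y^n_t|^2+E\int_0^T d[M^n]_r$ uniformly in $n$.

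\emph{Step 3 (passage to the limit).} The pair $(Y^n-Y^m,M^n-M^m)$ solves $\mathrm{BSDE}(0,g_{n,m})$ with $g_{n,m}(r,y):=f_n(r,y+Y^m_r)-f_m(r,Y^m_r)$, which is monotone in $y$ with the same $\mu$ and has free term $g_{n,m}(r,0)=f_n(r,Y^m_r)-f_m(r,Y^m_r)$. Hence Proposition \ref{prop1} reduces the Cauchy property of $(Y^n,M^n)$ to showing
\[
E\Big(\int_0^T|f_n(r,Y^m_r)-f_m(r,Y^m_r)|\,dr\Big)^2\longrightarrow 0,\qquad n,m\to\infty .
\]
On $\{\sup_t|Y^m_t|\le R\}$ the integrand is at most $\sup_{|y|\le R}|f_n-f|(r,y)+\sup_{|y|\le R}|f-f_m|(r,y)$, whose time-integrals tend to $0$ in $L^2(\Omega)$ by local uniform convergence of $f_n$ together with (H5) (dominated convergence, the integrals being bounded by $\int_0^T(2\mu^+R+\psi_{R+1}(t))\,dt\in L^2(\Omega)$), and this bound is independent of $m$. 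Granting the resulting convergence $Y^n\to Y$ in $\mathcal S^2(0,T;\BR^l)$ and $M^n\to M$ in $\MM^2_0(0,T;\BR^l)$, continuity (H4) and local uniform convergence give $f_n(r,Y^n_r)\to f(r,Y_r)$ $dt\otimes dP$-a.e., and the same truncation lets me pass to the limit in $\int_t^T f_n(r,Y^n_r)\,dr$, so that $(Y,M)$ solves $\mathrm{BSDE}(\xi,f)$ with $Y\in\mathcal S^2(0,T;\BR^l)$; condition (b) of Definition \ref{def1} then follows from the control of the drift on $\{|Y|\le R\}$.

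\emph{The hard part} will be the contribution of the large set $\{|Y^m_t|>R\}$ in Step 3. Because $f$ has arbitrary growth in $y$ and, for $l>1$, no pathwise $L^2$ bound on $\int_0^T|f(r,Y^m_r)|\,dr$ is available — the scalar estimate of Proposition \ref{th4} rests on the identity $|\langle\sgn(y),v\rangle|=|v|$, which degenerates to a strict inequality in dimension $l>1$ — one cannot simply dominate the tail by a uniform drift bound, and the Chebyshev smallness $P(\sup_t|Y^m_t|>R)\le CR^{-2}$ alone does not suffice. I expect the genuine work to lie in controlling this tail by a careful localisation (stopping at $\tau^n_R=\inf\{t:|Y^n_t|\ge R\}$) that exploits only the uniform $\mathcal S^2\times\MM^2_0$ estimate and the monotone structure, and in ordering the limits in $R$ and in $n,m$ correctly. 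This is precisely the point at which the merely local integrability hypothesis (H5), rather than any growth condition, must be used, and where the multidimensional case departs from the scalar one.
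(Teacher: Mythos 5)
Your Steps 1 and 2 are essentially sound: the Yosida regularisation preserves (H2) with the same $\mu$, fixes $f_n(t,0)=f(t,0)$, and is dominated as you claim (modulo replacing $2\mu^+|y|$ by $2|\mu||y|$ when $\mu<0$, and modulo checking progressive measurability of the resolvent), and the fixed-point construction for Lipschitz drivers is standard (the paper simply cites \cite{LLQ} for it). The genuine gap is the one you yourself flag at the end: the Cauchy property in Step 3 is never established, and the localisation you propose does not close it. On the event $\{\sup_t|Y^m_t|>R\}$ the only available control of $\int_0^T|f_n(r,Y^m_r)-f_m(r,Y^m_r)|\,dr$ comes from the Lipschitz constants of $f_n,f_m$, which blow up with $n,m$; Chebyshev gives $P(\sup_t|Y^m_t|>R)\le CR^{-2}$, but to convert this into smallness of the squared $L^2$ contribution you would need $L^4$ (or uniform integrability) bounds on $\sup_t|Y^m_t|^2$ that are not available, and stopping at $\tau^n_R$ replaces the terminal condition by $Y^n_{\tau_R}-Y^m_{\tau_R}$, which is $O(1)$ in $L^2$, not small. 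The root of the problem is structural: (H5) is a \emph{local} hypothesis in $y$, so it can be exploited only when the approximating solutions admit a pathwise, a.s.\ uniform bound; your scheme provides only an $\mathcal S^2$ bound, and no reordering of the limits in $R$ and $n,m$ repairs this.

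The paper's proof is organised precisely to avoid this trap, and the ordering of truncations is the missing idea. It first assumes $\xi$ and $f(\cdot,0)$ bounded; then a conditional-expectation argument applied to It\^o's formula yields a \emph{deterministic} bound $\sup_t|Y^n_t|\le a$ for all approximants, with $a$ depending only on the bounds on the data. With this $L^\infty$ bound in hand, the difference of the truncated drivers along the approximating solutions is dominated by $2\psi_a(t)\mathbf{1}_{\{\psi_a(t)>n\}}$, and (H5) plus dominated convergence gives the Cauchy property. (In the innermost step, where the regularised drivers are Lipschitz with exploding constants --- exactly your situation --- the paper does not attempt strong estimates at all: it uses weak compactness via Banach--Alaoglu and Minty's monotonicity trick, i.e.\ the test $X=Y-\varepsilon(U-f(\cdot,Y))$, to identify the weak limit of $f_n(\cdot,Y^n)$; this is the natural companion to your Yosida scheme, which you never invoke.) Only in the final step are $\xi$ and $f(\cdot,0)$ allowed to be unbounded, and there the approximation truncates the \emph{data alone}, $f_n(t,y)=f(t,y)-f(t,0)+\Pi_n(f(t,0))$, leaving the monotone part of the driver untouched; consequently $f_n(r,y)-f_m(r,y)=\Pi_n(f(r,0))-\Pi_m(f(r,0))$ is independent of $y$, the free term in Proposition \ref{prop1} never sees $Y^m$, and the Cauchy property follows from $L^2$ convergence of the truncated data. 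Your single-pass regularisation modifies the $y$-dependence of the driver while the data are still unbounded, which is exactly the configuration in which (H5) cannot be brought to bear; your closing remark about Proposition \ref{th4} being intrinsically scalar is correct, but the cure in the multidimensional case is this layered truncation, not a stopping-time argument.
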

\begin{proof}
Since $f$ does not depend on $M$, (H5) is of the form
\begin{enumerate}
\item[(H5)] for any $r>0$, $E(\int^T_0 \psi_r(t)\,dt)^2<\infty$, where $\psi_r(t)=\sup_{|y|<r}|f(t,y)-f(t,0)|$.
\end{enumerate}
Without loss of generality, we may assume that $\mu\le 0$. We divide the proof into three steps.\\
\textbf{Step 1.} In the first step, we need the following additional assumptions: there exist $\alpha, \beta>0$ such that 
\begin{equation}\label{eq5.1}
|f(t,y)|\le \alpha |f(t,0)|+\beta,\quad t\in[0,T],\,y\in\mathbb{R}^l,
\end{equation}
and  there exist $C>0$,  such that
\begin{equation}
\label{eq5.2}
|\xi|+\sup_{0\le t\le T}|f(t,0)|\le C
\end{equation}
In particular, the above conditions  imply  that $|f|$ is bounded. Let $\varrho$ be a non-negative smooth function on $\BR^l$ with support in $B(0,1):=\{x\in\BR^l: |x|\le 1\}$ such that $\varrho(0)=1$, and $\varrho(x)<1,\, x\neq 0$.
We set 
\[
f_n(t,y):=\int_{\BR^l} f(t,x)\varrho_n(y-x)\,dx,\quad t\in [0,T], y\in \BR^l,
\]
where $\varrho_n(x)=a_n\rho(nx)$, $a_n>0$, and 
\[
\int_{\mathbb{R}^l}\varrho_n(x)\,dx=1,\quad n\ge 1.
\]
It is well known  that $f_n$ is smooth and  $f_n\rightarrow f$ uniformly on compacts (both properties with respect to $y$ under fixed $\omega\in\Omega$ and $t\in [0,T]$).
Observe that by assumptions made in the present step
\[
|f_n(t,y)-f_n(t,y')|\le 2\ell^l(B(0,1))(\alpha C+\beta)\|\nabla \varrho_n\|_\infty|y-y'|,\quad t\in [0,T],\, y,y'\in \BR^l,
\]
where $\ell^l$ is $l$-dimensional Lebesgue measure. Furthermore,  $f_n$ satisfies (H2) with $\mu\le0$, and 
\begin{equation}\label{eq5.3}
|f_n(t,y)|\le (\alpha C+\beta).
\end{equation}
By \cite[Theorem 4.1]{LLQ} the problem BSDE$(\xi,f_n)$ admits a unique solution  $(Y^n,M^n)\in \sol $.
By Ito's formula
\begin{equation}\label{eq5.4}
\begin{split}
e^t|Y^n_t|^2+\int^T_t e^r |Y^n_r|^2\,dr+\int^T_te^r\,d[M^n]_r&=e^T|\xi|^2+2\int^T_t e^r \langle Y^n_r, f_n(r,Y^n_r)\rangle\,dr\\
&\quad-2\int^T_r e^r\langle Y^n_{r-}, dM^n_r\rangle,\quad t\in [0,T].
\end{split}
\end{equation}
Using  (H2) (cf. the comment preceding \eqref{eq5.3}), we get
\begin{equation*}
2\langle Y^n_r, f_n(r,Y^n_r)\rangle \le |Y^n_r|^2+|f_n(t,0)|^2.
\end{equation*}
Therefore, applying conditional expectation to both sides of \eqref{eq5.4} yields
\[
e^t|Y^n_t|^2\le E\Big(e^T|\xi|^2+\int^T_0 e^r|f_n(r,0)|^2\,dr|\mathcal{F}_t\Big).
\]
From this,  \eqref{eq5.2} and \eqref{eq5.3},
\begin{equation}\label{eq5.5}
|Y^n_t|\le e^T[C+\sqrt T(\alpha C+\beta)],\quad t\in [0,T].
\end{equation}
Let $U^n_t=f_n(t,Y^n_t)$, $t\in[0,T]$. By \eqref{eq5.3}, Proposition \ref{prop1},  and the 
Burkholder-Davis-Gundy inequality,
\[
\sup_{n\in\mathbb{N}}\Bigg(E\int^T_0 |U^n_t|^2\,dt+E\int^T_0\,d[M^n]_r+E\sup_{t\le T}|M^n_t|^2\Bigg)<\infty.
\]
Since both $\HH^{2}_{\mathbb F}(0,T;\BR^l)$, $L^{2}(\Omega,P;\BR^l)$ are   Hilbert spaces, they are  reflexive. Therefore, by the Banach-Alaoglu theorem there exists a
subsequence (not relabelled) such that   $(U^n,M^n)$  converges weakly in the space $\HH^{2}_{\mathbb F}(0,T;\BR^l)$ to $(U,\tilde M)$ and $M^n_T$ converges weakly in the space $L^{2}(\Omega,P;\BR^l)$ to $N$. Let $M$ be a c\`adl\`ag version of  the martingale $E(N|\mathcal{F}_t)$, $t\in[0,T]$. 
We shall prove that for any  $\tau\in\mathcal{T}$ 
\begin{equation}\label{eq5.7}
M^n_{\tau}\rightarrow M_{\tau},\quad \mbox{weakly in }\quad L^{2}(\Omega,P;\BR^l).
\end{equation}
Let $X\in L^{2}(\Omega,P;\BR^l)$, then
\[
E\langle M^n_{\tau},X\rangle=E\langle E(M^n_T|\mathcal{F}_{\tau}), X\rangle=E\langle M^n_T, E(X|\mathcal{F}_{\tau})\rangle \rightarrow E\langle M_T, E(X|\mathcal{F}_{\tau})\rangle=E\langle M_{\tau},X\rangle.
\]
Observe that $\tilde M=M$ in $\HH^{2}_{\mathbb F}(0,T;\BR^l)$.
Indeed, let  $Z\in \HH^{2}_{\mathbb F}(0,T;\BR^l)$. Then $Z_r\in L^{2}(\Omega,P;\BR^l)$ for a.e. $r\in [0,T]$. 
By \eqref{eq5.7}, 
\[
E\langle M^n_T,Z_r\rangle\to E\langle M_T,Z_r\rangle,\quad\mbox{for a.e. }\,\, r\in [0,T].
\]
Therefore,
\begin{align*}
E\int_0^T\langle \tilde M_r, Z_r\rangle \,dr&=\lim_{n\to\infty} E\int_0^T\langle M^n_r,Z_r\rangle\,dr\\&
=\lim_{n\to\infty}\int_0^TE\langle M^n_T,Z_r\rangle\,dr=\int_0^TE\langle M_T,Z_r\rangle\,dr=E\int_0^T\langle M_r,Z_r\rangle\,dr.
\end{align*}
In the third equation we applied  the Lebesgue dominated convergence theorem by using the following estimation
\[
E\langle M^n_T,Z_r\rangle \le \sup_{n\ge 1} E\sup_{t\le T}|M^n_t|^2+E|Z_r|^2.
\]
The next convergence property we need is as follows:  for any $t\in [0,T]$
\begin{equation}\label{eq5.8}
\int^T_{t} U^n_r\,dr\rightarrow\int^T_{t} U_r\,dr\quad\mbox{ weakly in }\quad L^{2}(\Omega,P;\BR^l).
\end{equation}
This easily follows  from the following calculation.
Let  $X\in L^{2}(\Omega,P;\BR^l)$, then
\begin{equation*}\label{eq5.8}
\begin{split}
E \langle X,\int^T_t U^n_r\,dr\rangle&=E\int^T_t \langle X, E(U^n_r|\mathcal{F}_r)\rangle \,dr\\
&\quad=E\int^T_0 \langle U^n_r, \mathbf{1}_{[t,T]}(r)E(X|\mathcal{F}_r)\rangle\,dr\rightarrow E\int^T_0 \langle U_r, \mathbf{1}_{[t,T]}(r)E(X|\mathcal{F}_r)\rangle\,dr\\&
\quad=E \langle X,\int^T_t U_r\,dr\rangle.
\end{split}  
\end{equation*}
Set
\[
Y_t:=\xi+\int^T_t U_r\,dr-\int^T_t\,dM_r,\quad t\in[0,T].
\]
By  \eqref{eq5.7} and \eqref{eq5.8} for any $t\in [0,T]$
\begin{equation}
\label{eq5.9}
Y^n_t\rightarrow Y_t\quad\mbox{ weakly in }\quad L^{2}(\Omega,P;\BR^l).
\end{equation}
We shall prove  that $U=f(\cdot,Y)$ in $\HH^{2}_{\mathbb F}(0,T;\BR^l)$. Let $X\in \HH^{2}_{\mathbb F}(0,T;\BR^l)$. By \eqref{eq5.3}, we have  $f_n(\cdot,X)\rightarrow f(\cdot,X)$ in $\HH^{2}_{\mathbb F}(0,T;\BR^l)$. 
From this and (H2) we conclude
\begin{equation}\label{eq5.10}
\limsup_{n\rightarrow\infty}E\int^T_0\langle Y^n_t-X_t, f_n(t,Y^n_t)-f(t,X_t)\rangle\,dt\le 0.
\end{equation}
By Ito's formula
\begin{equation}
\label{eq5.11}
2E\int^T_0 \langle Y^n_r,f_n(r,Y^n_r)\rangle\,dr=E|Y^n_0|^2-E|\xi|^2+\int^T_0\,d[M^n]_r.
\end{equation}
Now, we apply the very well known fact saying  that in Hilbert spaces the norm generated by the inner product is weakly lower semicontinuous.
Thus, by \eqref{eq5.9}
\[
\liminf_{n\to\infty}E|Y^n_0|^2\ge E|Y_0|^2,
\]
and by \eqref{eq5.7} 
\begin{align*}
\liminf_{n\rightarrow \infty}E\int_0^T\,d[M^n]_r=\liminf_{n\to\infty}E|M^n_T|^2\,dr
\ge E|M_T|^2\,dr=E\int_0^T\,d[M]_r.
\end{align*}
The above two inequalities when applied to \eqref{eq5.11} give 
\begin{equation}
\label{eq5.12}
\liminf_{n\rightarrow\infty} E\int^T_0 \langle Y^n_t, f(t,Y^n_t)\rangle\,dt\ge E|Y_0|^2-E|\xi|^2+\int^T_0\,d[M]_r=E\int^T_0 \langle Y_t,U_t\rangle\,dt.
\end{equation}
It follows from this and  \eqref{eq5.10} that
\begin{equation}\label{eq5.13}
\begin{split}
&E\int^T_0\langle Y_t-X_t,U_t-f(t,X_t)\rangle\,dt\\
&\quad\le \liminf_{n\rightarrow\infty} E\int^T_0\langle Y^n_t-X_t, f_n(t,Y^n_t)-f(t,X_t)\rangle\,dt\le 0. 
\end{split}
\end{equation}
Let us choose $X_t=Y_t-\varepsilon(U_t-f(t,Y_t))$, $t\in[0,T]$, $\varepsilon>0$. Dividing  \eqref{eq5.13}  by $\varepsilon$ and letting $\varepsilon\rightarrow 0$, we obtain
\[
E\int^T_0|U_t-f(t,Y_t)|^2\,dt\le 0.
\]
This concludes the proof of Step 1.

\textbf{Step 2.} 
In this step we dispense with \eqref{eq5.1} but we shall keep in force \eqref{eq5.2}.
Let $a>0$ be such that
\[
e^{T/2}(|\xi|+\sqrt{T}\sup_{0\le t\le T}|f(t,0)|)\le a.
\]
Let $\theta_a$ be a smooth function on $\BR^l$ such that $0\le \theta_a\le 1$, $\theta_a(y)=1$ for $|y|\le a$ and $\theta_a(y)=0$ for $|y|\ge a+1$. For $n\in\mathbb{N}$ let us consider (cf. (H5))
\[
\hat f_n(t,y)=\theta_a(y)\cdot (f(t,y)-f(t,0))\cdot\frac{n}{\psi_{a+1}(t)\vee n}+f(t,0).
\]
For each $n\in\mathbb{N}$, $\hat f_n$ satisfies (H1), (H4), (H5). Furthermore, $\hat f_n$ also satisfies (H2) but with the positive constant $\mu$.
Indeed, let us take $t\in[0,T]$, $y,y'\in\mathbb{R}^l$ and assume that $|y|\ge a+1$ and $|y'|\ge a+1$. Then the inequality in  (H2) is trivially satisfied with any $\mu\ge 0$ since $\theta_a(y)=\theta_a(y')=0$. Suppose that  $|y|< a+1$. Since $f$ satisfies (H2) with $\mu\le 0$ we have
\begin{equation*}
\begin{split}
\langle y-y',\hat f_n(t,y)-\hat f_n(t,y')\rangle\le \frac{n(\theta_a(y)-\theta_a(y'))}{\psi_{a+1}(t)\vee n}\langle y-y',f(t,y)-f(t,0)\rangle.
\end{split}
\end{equation*}
By the very definition of $\psi_{a+1}$, we have $|f(t,y)-f(t,0)|\le \psi_{a+1}(t)$. Therefore 
\begin{equation*}
\begin{split}
\langle y-y',\hat f_n(t,y)-\hat f_n(t,y')\rangle\le n\|\nabla\theta_a\|_\infty|y-y'|^2.
\end{split}
\end{equation*}
Thus, $\hat f_n$ satisfies  (H2)  with $\mu=n\|\nabla \theta_a\|_\infty$. Furthermore, since  $\theta_{a}(y)|f(t,y)-f(t,0)|\le \psi_{a+1}(t),\, t\in [0,T],\, y\in\BR^l$,
we also have
\[
|\hat f_n(t,y)|\le n+|f(t,0)|,\quad t\in[0,T],\,y\in\mathbb{R}^l.
\]
Therefore, by Step 1, the problem BSDE$(\xi,\hat f_n)$ admits a unique solution $(Y^n,M^n)\in\sol$.
By Ito's formula
\begin{equation}\label{eq5.14}
\begin{split}
&e^t|Y^n_t|^2+\int^T_t e^r\,d[M^n]_r+\int^T_t e^r|Y^n_r|^2\,dr= e^T|\xi|^2+2\int^T_t e^r\langle Y^n_r, \hat f_n(r,Y^n_r)\rangle\,dr\\
&\quad-2\int^T_t e^r\langle Y^n_{r-}, dM^n_r\rangle,\quad t\in[0,T].
\end{split}
\end{equation}
Since $f$ satisfies (H2) with $\mu\le 0$, we have 
\[
2\langle Y^n_t, \hat f_n(t,Y^n_t)\rangle\le 2\langle Y^n_t, f(t,0)\rangle\le |Y^n_t|^2+\sup_{0\le t\le T}|f(t,0)|^2.
\]
Therefore, applying the conditional expectation to  \eqref{eq5.14}, we get
\[
\sup_{0\le t\le T}|Y^n_t|^2\le e^{T}E(|\xi|^2+T\sup_{0\le t\le T}|f(t,0)|^2|\FF_t)\le a^2.
\]
As a result, we obtain that in fact $(Y^n,M^n)$ is a solution to BSDE$(\xi,f_n)$, where
\[
f_n(t,y)=(f(t,y)-f(t,0))\frac{n}{\psi_{a+1}(t)\vee n}+f(t,0),\quad t\in[0,T],\,y\in\mathbb{R}^l.
\]
Note that $f_n$ satisfies (H2) with $\mu\le 0$.
Set $U=Y^{n+m}-Y^n$ and $V=M^{n+m}-M^n$. By Ito's formula
\begin{equation}\label{eq5.15}
\begin{split}
|U_t|^2+\int^T_t\,d[V]_r&=2\int^T_t\langle U_r, f_{n+m}(r,Y^{n+m}_r)-f_n(r,Y^n_r)\rangle\,dr\\&\quad
-2\int^T_t\langle U_{r-}, dV_r\rangle,\quad t\in [0,T]. 
\end{split}
\end{equation}
By (H2) (applied to $f_{n+m}$) and  the fact that $\sup_{0\le t\le T}|U_t|\le 2a$, we have
\begin{equation}\label{eq5.16}
\langle U_r, f_{n+m}(r,Y^{n+m}_r)-f_n(r,Y^n_r)\rangle\le 2a|f_{n+m}(r,Y^{n}_r)-f_n(r,Y^n_r)|.
\end{equation}
From this  and  \eqref{eq5.15}, we conclude
\begin{equation}\label{eq5.17}
E\int^T_0\,d[V]_r\le 4aE\int^T_0 |f_{n+m}(r,Y^n_r)-f_n(r,Y^n_r)|\,dr.
\end{equation}
By \eqref{eq5.15}, \eqref{eq5.16} 
\begin{equation}
\label{eq5.18}
E\sup_{0\le t\le T}|U_t|^2\le 4aE\int^T_0 |f_{n+m}(r,Y^n_r)-f_n(r,Y^n_r)|\,dr+E\sup_{0\le t\le T}\Bigg|2\int^T_t\langle U_{r-}, dV_r\rangle\Bigg|.
\end{equation}
By  the  Burkholder-Davis-Gundy inequality and Young's inequality there exists $C_1>0$ such that 
\begin{equation}
\label{eq5.19}
\begin{split}
E\sup_{0\le t\le T}\Bigg|2\int^T_t\langle U_{r-}, dV_r\rangle\Bigg|&\le C_1 E\Bigg(\int^T_0|U_r|^2\,d[V]_r\Bigg)^{\frac{1}{2}}\le C_1 E\sup_{0\le t\le T}|U_r|\Bigg(\int^T_0\,d[V]_r\Bigg)^{\frac{1}{2}}\\&\le \frac{1}{2}E\sup_{0\le t\le T}|U_t|^2+C^2_1 E\int^T_0\,d[V]_r.
\end{split}
\end{equation}
By  \eqref{eq5.17}--\eqref{eq5.19}
\begin{equation*}
E\sup_{0\le t\le T}|U_t|^2+E\int^T_0\,d[V]_r\le 8a(C^2_1+1)E\int^T_0 |f_{n+m}(r,Y^n_r)-f_n(r,Y^n_r)|\,dr.
\end{equation*}
Observe  that
\[
|f_{n+m}(t,Y^n_t)-f_n(t,Y^n_t)|\le 2\psi_{a}(t)\mathbf{1}_{\{\psi_{a}(t)>n\}}.
\]
Therefore, using (H5), we conclude that  $(Y^n,M^n)$ is a Cauchy sequence in the space $\sol$.
Thus, there exists $(Y,M)\in\sol$ such that
\begin{equation}
\label{eq5.19a}
E\sup_{0\le t\le T}|Y^n_t-Y_t|^2+E\int^T_0\,d[M^n-M]_r\rightarrow 0,\quad n\rightarrow\infty.
\end{equation}
By the definition of a solution to BSDE$(\xi,f_n)$,
\begin{equation}
\label{eq5.19b}
Y^n_t=\xi+\int^T_t f_n(r,Y^n_r)\,dr-\int^T_t\,dM^n_r,\quad t\in[0,T].
\end{equation}
By \eqref{eq5.19a}, up to subsequence, $Y^n_t(\omega)\rightarrow Y_t(\omega)$ for $\ell^1\otimes P$-a.e. $(t,\omega)\in[0,T]\times \Omega$.
Thus, by (H4), $f_n(\cdot,Y^n)\rightarrow f(\cdot,Y),\, \ell^1\otimes P$-a.e. 
Furthermore, since $|Y^n_t|\le a,\, t\in [0,T],\, n\ge 1$, we have that for any $t\in [0,T]$,
\begin{equation}
\label{eq5.19c}
|f_n(t,Y^n_t)|\le \psi_a(t)+|f(t,0)|,\quad n\ge 1.
\end{equation}
Therefore, from   (H5), and the Lebesgue dominated convergence theorem, we infer that
\begin{equation}
\label{eq5.19l}
\sup_{t\le T}\Big|\int_t^T f_n(r,Y^n_r)\,dr-\int_t^Tf(r,Y_r)\,dr\Big|\to 0,\quad n\to\infty.
\end{equation}
Thanks to  this convergence and \eqref{eq5.19a}, we may pass to the limit in \eqref{eq5.19b} getting that  $(Y,M)$ is a solution to  BSDE$(\xi,f)$.\\

\textbf{Step 3.}
Finally, we shall dispense with \eqref{eq5.2}. For each $n\in\mathbb{N}$ set
\[
\xi_n:=\Pi_n(\xi),\quad f_n(t,y):=f(t,y)-f(t,0)+\Pi_n(f(t,0)),
\]
where $\Pi_n(y)=y\min(n,|y|)|y|^{-1}$.
Obviously,
\begin{equation}\label{eq5.20}
E|\xi-\xi_n|^2\rightarrow 0,\quad E\Bigg(\int^T_0|f(t,0)-f_n(t,0)|\,dt\Bigg)^2\rightarrow 0,
\end{equation}
as $n\rightarrow\infty$.  We see that  for each $n\in\mathbb{N}$, the pair $(\xi_n,f_n)$ satisfies the assumptions of Step 2.  Therefore,  for any $n\in\mathbb{N}$, there exists a unique solution  $(Y^n,M^n)\in\sol$ to BSDE$(\xi_n,f_n)$. 
Let
\[
F(r,y):= f_n(r,y+Y^m_r)-f_m(r,Y^m_r),\quad r\in [0,T],\, y\in\BR^l.
\]
Observe that $(Y^n-Y^m,M^n-M^m)$ is a solution to BSDE$(\xi_n-\xi_m,F)$, and data $\xi_n-\xi_m,F$ satisfy (H1)--(H2) (in place of $\xi$ and $f$, respectively).
Thus, by Proposition \ref{prop1}
\begin{equation}
\label{eq5.21}
\begin{split}
E\sup_{0\le t\le T}|Y^n_t-Y^m_t|^2&+E\int^T_0\,d[M^n-M^m]_r\le C\Big(E|\xi^n-\xi^m|^2+E\big(\int_0^T|F(r,0)|\,dr\big)^2\Big)\\&
\quad=C\Bigg(E|\xi_n-\xi_m|^2+\Big(\int^T_0|f_n(r,0)-f_m(r,0)|\,dr\Big)^2\Bigg).
\end{split}
\end{equation}
From this and  \eqref{eq5.20}, we conclude that
\[
E\sup_{0\le t\le T}|Y^n_t-Y^m_t|^2+E\int^T_0\,d[M^n-M^m]_r\rightarrow 0,\quad n,m\rightarrow\infty.
\]
This means that $(Y^n,M^n)$ is a Cauchy sequence in $\sol$. 
Thus, there exists $(Y,M)\in\sol $ such that
\begin{equation}
\label{eq5.22}
E\sup_{0\le t\le T}|Y^n_t-Y_t|^2+E\int^T_0\,d[M^n-M]_r\rightarrow 0,\quad n\rightarrow\infty.
\end{equation}
By the definition of a solution to BSDE$(\xi_n,f_n)$,
\begin{equation}
\label{eq5.23}
Y^n_t=\xi_n+\int^T_t f_n(r,Y^n_r)\,dr-\int^T_t\,dM^n_r,\quad t\in[0,T].
\end{equation}
The proof shall be concluded by passing to the limit  in \eqref{eq5.23}. The only term  that requires care is the  integral of $f_n$. 
By \eqref{eq5.22}, there exists a subsequence (not relabelled) such that 
\[
\sup_{t\le T}|Y^n_t-Y_t|\rightarrow 0\quad  a.s.
\]
Therefore,  $X:=\sup_{n\ge 1}\sup_{t\le T}|Y^n_t|$ is finite a.s. Observe that  for any $a>0$,  \eqref{eq5.19c} holds  on the set $\{X\le a\}$. 
Thus repeating the reasoning following \eqref{eq5.19b}
we get that \eqref{eq5.19l} holds on the set $\{X\le a\}$. Since $a>0$ was arbitrary, and $X$ is finite a.s., we easily deduce that the convergence  \eqref{eq5.19l}
holds in probability $P$. Using this and \eqref{eq5.22}, and letting $n\rightarrow \infty$ in \eqref{eq5.23}, we get that $(Y,M)$ is a solution to BSDE$(\xi,f)$.
\end{proof}

\section{Proof of the main result}
\label{sec6}

\begin{lemma}
\label{lm.waog1}
Assume that \textnormal{(H1)--(H5)}  are satisfied.  Let $H^1,H^2\in \MM^{2}_0(0,T;\BR^l)$ and $(Y^i,M^i)\in\sol$ be a solution to BSDE$(\xi,f(\cdot,\cdot,H^i))$,
$i=1,2.$ Let $0\le a<b\le T$ and $(b-a)\le \frac{1}{2(1+16\lambda+\mu^+)}$. Then there exists $\beta\ge 1$ depending only on $\lambda,\mu^+,(b-a)$ such that 
\begin{equation}
\label{eq6.leqt}
\sup_{a\le t\le b}E|Y^1_t-Y^2_t|^2+E\int_a^b\,d[M^1-M^2]_r\le \beta E|Y^1_b-Y^2_b|^2+\frac14 E\int_a^T\,d[H^1-H^2]_r.
\end{equation}
\end{lemma}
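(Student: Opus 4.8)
The plan is to run the It\^o/monotonicity comparison of the two solutions on the interval $[a,b]$, in the spirit of the a priori estimate of Proposition~\ref{prop1}, and then to use the smallness of $b-a$ through Gronwall's lemma so as to keep the coefficient of $E\int_a^T d[H^1-H^2]_r$ below $\tfrac14$. First I would put $\bar Y:=Y^1-Y^2$ and $\bar M:=M^1-M^2$; subtracting the two equations written on $[a,b]$ gives
\[
\bar Y_t=\bar Y_b+\int_t^b\big(f(r,Y^1_r,H^1)-f(r,Y^2_r,H^2)\big)\,dr-\int_t^b d\bar M_r,\qquad t\in[a,b].
\]
It\^o's formula applied to $|\bar Y_t|^2$ then yields
\[
|\bar Y_t|^2+\int_t^b d[\bar M]_r=|\bar Y_b|^2+2\int_t^b\langle \bar Y_r,f(r,Y^1_r,H^1)-f(r,Y^2_r,H^2)\rangle\,dr-2\int_t^b\langle \bar Y_{r-},d\bar M_r\rangle.
\]

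Next I would split the drift as $f(r,Y^1_r,H^1)-f(r,Y^2_r,H^1)$ plus $f(r,Y^2_r,H^1)-f(r,Y^2_r,H^2)$. Pairing the first summand with $\bar Y_r$ and using the monotonicity hypothesis (H2) bounds it by $\mu^+|\bar Y_r|^2$, while the second is treated by the Cauchy--Schwarz and Young inequalities, contributing $\delta|\bar Y_r|^2+\delta^{-1}|f(r,Y^2_r,H^1)-f(r,Y^2_r,H^2)|^2$ for a parameter $\delta>0$ fixed below. Since $\bar Y\in\mathcal S^2(0,T;\BR^l)$ and $\bar M\in\MM^2_0(0,T;\BR^l)$, the process $\int_0^\cdot\langle \bar Y_{r-},d\bar M_r\rangle$ is a true martingale and vanishes under expectation, exactly as in the proof of Proposition~\ref{prop1}.

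The conceptually key step is the control of the $M$-difference term: enlarging the interval of integration from $[t,b]$ to $[a,T]$ (the integrand is nonnegative) and applying (H3) with terminal time $T$, $t=a$, and the process $Y^2$ in place of a fixed $y$ --- the same way (H3) is used at the random argument $Y_r$ in the proof of Proposition~\ref{prop1} --- gives
\[
E\int_t^b|f(r,Y^2_r,H^1)-f(r,Y^2_r,H^2)|^2\,dr\le\lambda\,E\int_a^T d[H^1-H^2]_r.
\]
Observe that after this enlargement the bound is independent of $t$, which is what will let Gronwall treat it as a constant. Collecting everything, for every $t\in[a,b]$ I reach the master estimate
\[
E|\bar Y_t|^2+E\int_t^b d[\bar M]_r\le E|\bar Y_b|^2+(2\mu^++\delta)\int_t^b E|\bar Y_r|^2\,dr+\frac{\lambda}{\delta}\,E\int_a^T d[H^1-H^2]_r.
\]

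Finally I would take $\delta=32\lambda$, so that the $\bar Y$-coefficient is $2(\mu^++16\lambda)$ and the coefficient $\lambda/\delta$ of $E\int_a^T d[H^1-H^2]_r$ equals $\tfrac1{32}$. Dropping the nonnegative $E\int_t^b d[\bar M]_r$ and applying the backward Gronwall lemma to $t\mapsto E|\bar Y_t|^2$ gives $E|\bar Y_t|^2\le(E|\bar Y_b|^2+\tfrac1{32}E\int_a^T d[H^1-H^2]_r)\,e^{2(\mu^++16\lambda)(b-t)}$, and reinserting this at $t=a$ bounds $E\int_a^b d[\bar M]_r$ by the same right-hand side. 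The hypothesis $(b-a)\le\frac1{2(1+16\lambda+\mu^+)}$ forces $2(\mu^++16\lambda)(b-a)\le\frac{\mu^++16\lambda}{1+16\lambda+\mu^+}<1$, hence $e^{2(\mu^++16\lambda)(b-a)}<e$; adding the two estimates therefore gives \eqref{eq6.leqt} with $\beta:=2e^{2(\mu^++16\lambda)(b-a)}\ge1$ and with the coefficient of $E\int_a^T d[H^1-H^2]_r$ at most $2e\cdot\tfrac1{32}=\tfrac e{16}<\tfrac14$. The point that needs care --- and that I expect to be the only real obstacle --- is precisely this last bookkeeping: one must use Gronwall rather than passing to the supremum and absorbing $\int_t^bE|\bar Y_r|^2\,dr$ into the left-hand side, since the latter would introduce the factor $(1-2(\mu^++16\lambda)(b-a))^{-1}\le 1+16\lambda+\mu^+$ in front of the $[H^1-H^2]$ term, which is not controlled by $\tfrac14$, whereas Gronwall produces the bounded factor $e^{2(\mu^++16\lambda)(b-a)}<e$. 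Note also that, because the left-hand side of \eqref{eq6.leqt} is the supremum of the second moments $E|\bar Y_t|^2$ rather than $E\sup_{a\le t\le b}|\bar Y_t|^2$, no Burkholder--Davis--Gundy estimate is required here.
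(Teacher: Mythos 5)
Your proof is correct and follows essentially the same route as the paper's: It\^o's formula for $|Y^1_t-Y^2_t|^2$, a split of the driver difference handled by (H2) and Young's inequality, application of (H3) at the random argument over the enlarged interval $[a,T]$, backward Gronwall, and reinsertion into the It\^o identity to recover the bracket term. The only difference is bookkeeping: the paper takes the Young parameter $1+\nu=1+16\lambda$ where you take $\delta=32\lambda$, and your choice is in fact the more careful one, since it keeps the coefficient of $E\int_a^T\,d[H^1-H^2]_r$ below $\tfrac14$ even after adding the supremum estimate to the bracket estimate --- the factor-of-two step that the paper's closing sentence (``Then we get the desired inequality'') glosses over.
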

\begin{proof}
By It\^o's formula
\begin{equation}\label{th3.1waog}
\begin{split}
&|Y^1_t-Y^2_t|^2+\int^b_t \,d[M^1-M^2]_r= |Y^1_b-Y^2_b|^2\\
&\quad+2\int^b_t \langle Y^1_r-Y^2_r, f(r,Y^1_r,H^1)-f(r,Y^2_r,H^2)\rangle\,dr\\
&\quad-2\int^b_t\langle Y^1_{r-}-Y^2_{r-}, d(M^1_r-M^2_r)\rangle,\quad t\in [a,b].
\end{split}
\end{equation}
By (H2) for any $\nu\ge 0$,
\begin{equation}\label{th3.2waog}
\begin{split}
&\langle Y^1_r-Y^2_r, f(r,Y^1_r,H^1)-f(r,Y^2_r,H^2)\rangle\\
&\quad \le  |Y^1_r-Y^2_r| |f(r,Y^1_r,H^1)-f(r,Y^1_r,H^2)|+\mu^+|Y^1_r-Y^2_r|^2\\
&\quad\le \frac12(1+\nu)|Y^1_r-Y^2_r|^2+\frac{1}{2(1+\nu)}|f(r,Y^1_r,H^1)-f(r,Y^1_r,H^2)|^2+\mu^+|Y^1_r-Y^2_r|^2.
\end{split}
\end{equation} 
Due to the assumptions made on $(Y^i,M^i)$ it is routine to verify that the process  $\int^\cdot_0 (Y^1_{r-}-Y^2_{r-})\,d(M^1_r-M^2_r)$  is a martingale. Therefore, applying  the expectation to (\ref{th3.1waog}) and (\ref{th3.2waog}), and combining these two inequalities, we get by (H3)
\begin{equation}\label{th3.3waogb}
\begin{split}
&E|Y^1_t-Y^2_t|^2+E\int^b_t \,d[M^1-M^2]_r\le E|Y^1_b-Y^2_b|^2\\
&\quad+\beta_\nu E\int^b_t |Y^1_r-Y^2_r|^2\,dr+\frac{\lambda}{1+\nu}E\int^T_a d[H^1-H^2]_r,
\end{split}
\end{equation}
where $\beta_\nu:= (1+\nu+2\mu^+)$. 
By  Gronwall's lemma
\begin{equation}\label{th3.4waog}
E |Y^1_t-Y^2_t|^2\le e^{\beta_\nu (b-a)}\Big(E|Y^1_b-Y^2_b|^2+\frac{\lambda}{1+\nu}E\int^T_a d[H^1-H^2]_r\Big).
\end{equation}
This combined with (\ref{th3.3waogb}) yields
\begin{align*}
E|Y^1_t-Y^2_t|^2&+E\int^b_t \,d[M^1-M^2]_r\le E|Y^1_b-Y^2_b|^2\\
&\quad+\beta_\nu(b-a) e^{\beta_\nu (b-a)}\Big(E|Y^1_b-Y^2_b|^2+\frac{\lambda}{1+\nu}E\int^T_a d[H^1-H^2]_r\Big)\\&
\quad +\frac{\lambda}{1+\nu}E\int^T_a d[H^1-H^2]_r\\&=(1+\beta_\nu(b-a) e^{\beta_\nu (b-a)})E|Y^1_b-Y^2_b|^2\\&+
\frac{\lambda}{1+\nu}(1+\beta_\nu(b-a) e^{\beta_\nu (b-a)})E\int^T_a d[H^1-H^2]_r.
\end{align*}
Set $\nu:= 16\lambda$, and $\beta:= 1+\beta_\nu(b-a) e^{\beta_\nu (b-a)}$.
Then we get the desired inequality.
\end{proof}

\begin{remark}
Observe that the last integral on the right-hand side of \eqref{eq6.leqt} is made over $[a,T]$ and not over $[a,b]$.
This is the best we can get due to the assumption (H3).
\end{remark}

\begin{theorem}\label{th3}
Assume that \textnormal{(H1)--(H5)} are satisfied. Then there exists a solution $(Y,M)$ to \textnormal{BSDE}$(\xi,f)$ such that $Y\in\mathcal{S}^{2}(0,T;\BR^l)$.
\end{theorem}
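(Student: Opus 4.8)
The plan is to solve \eqref{eqi.1} by a Picard-type iteration in which the martingale argument of $f$ is frozen, and to prove that the scheme converges by running a \emph{backward} induction over a fine partition of $[0,T]$, exploiting the strict contraction factor $\tfrac14$ supplied by Lemma \ref{lm.waog1}. The decisive point is that a one-shot global contraction is unavailable because the bound in (H3), and hence the last term in \eqref{eq6.leqt}, reaches all the way to $T$; the partition is what lets us convert this non-local estimate into genuine convergence.

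First I would set $M^{(0)}:=0$ and, given $M^{(n)}\in\MM^{2}_0(0,T;\BR^l)$, define $(Y^{(n+1)},M^{(n+1)})$ to be a solution of BSDE$(\xi,f(\cdot,\cdot,M^{(n)}))$. The frozen driver $(r,y)\mapsto f(r,y,M^{(n)})$ no longer depends on the martingale variable, so I must only check that it satisfies the hypotheses of Proposition \ref{Proposition3}, namely (H1),(H2),(H4),(H5). Conditions (H2),(H4) are inherited verbatim, while the integrability requirements follow from (H1),(H5) for $f$ together with (H3) (applied at $y=0$, and in its locally-uniform-in-$y$ form controlling the oscillation of $f$ in the martingale slot) and $M^{(n)}\in\MM^{2}_0$. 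Proposition \ref{Proposition3} then yields $(Y^{(n+1)},M^{(n+1)})\in\sol$, and Proposition \ref{prop1} guarantees $M^{(n+1)}\in\MM^{2}_0(0,T;\BR^l)$, so the recursion is well defined.

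Next I would fix $\delta\in(0,\tfrac{1}{2(1+16\lambda+\mu^+)}]$ with $N:=T/\delta\in\BN$, put $t_k:=k\delta$, and apply Lemma \ref{lm.waog1} on each $[t_k,t_{k+1}]$ with $H^1:=M^{(n)}$, $H^2:=M^{(n-1)}$. Writing $a^{(n)}_k:=E\int_{t_k}^{t_{k+1}}d[M^{(n+1)}-M^{(n)}]_r$ and $b^{(n)}_k:=\sup_{t_k\le t\le t_{k+1}}E|Y^{(n+1)}_t-Y^{(n)}_t|^2$, the lemma gives
\[
b^{(n)}_k+a^{(n)}_k\le \beta\,E|Y^{(n+1)}_{t_{k+1}}-Y^{(n)}_{t_{k+1}}|^2+\tfrac14\,E\int_{t_k}^T d[M^{(n)}-M^{(n-1)}]_r,
\]
with $\beta$ independent of $k,n$. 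On the terminal interval $k=N-1$ the boundary term vanishes since $Y^{(n+1)}_T=Y^{(n)}_T=\xi$, and the tail reduces to $a^{(n-1)}_{N-1}$, giving the genuine geometric contraction $a^{(n)}_{N-1}\le\tfrac14 a^{(n-1)}_{N-1}$. I would then induct backward on $k$: assuming geometric decay at some rate $q\in(\tfrac14,1)$ of $a^{(n)}_j,b^{(n)}_j$ for all $j>k$, the boundary term is $\le b^{(n)}_{k+1}=O(q^n)$ and the part of the tail over $[t_{k+1},T]$ is $O(q^{n-1})$, leaving $a^{(n)}_k\le\tfrac14 a^{(n-1)}_k+O(q^n)$ (and likewise for $b^{(n)}_k$), which propagates the geometric decay to level $k$. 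Summing over the finitely many intervals yields $E\int_0^T d[M^{(n+1)}-M^{(n)}]_r=O(q^n)$, so $(M^{(n)})$ is Cauchy in $\MM^{2}_0(0,T;\BR^l)$; call its limit $M$. To upgrade the $Y$-convergence, I would note that $(Y^{(n)}-Y^{(m)},M^{(n)}-M^{(m)})$ solves BSDE$(0,F)$ with $F(r,y)=f(r,y+Y^{(m)}_r,M^{(n-1)})-f(r,Y^{(m)}_r,M^{(m-1)})$, which satisfies (H1)--(H3), and apply Proposition \ref{prop1} to get $E\sup_t|Y^{(n)}_t-Y^{(m)}_t|^2\le CT\lambda\,E\int_0^T d[M^{(n-1)}-M^{(m-1)}]_r\to0$; hence $Y^{(n)}\to Y$ in $\mathcal S^{2}(0,T;\BR^l)$.

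Finally I would pass to the limit in $Y^{(n+1)}_t=\xi+\int_t^T f(r,Y^{(n+1)}_r,M^{(n)})\,dr-(M^{(n+1)}_T-M^{(n+1)}_t)$. Splitting the driver difference into a martingale part, estimated by (H3) via $E\big|\int_t^T[f(r,Y^{(n+1)}_r,M^{(n)})-f(r,Y^{(n+1)}_r,M)]\,dr\big|\le(T\lambda\,E\int_t^T d[M^{(n)}-M]_r)^{1/2}\to0$, and a state part $\int_t^T[f(r,Y^{(n+1)}_r,M)-f(r,Y_r,M)]\,dr$, handled through a subsequential $\ell^1\otimes P$-a.e.\ limit, continuity (H4), and the (H5)-based domination argument of Step 3 of Proposition \ref{Proposition3}, shows that the driver integral converges in probability to $\int_t^T f(r,Y_r,M)\,dr$. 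Thus $(Y,M)$ solves BSDE$(\xi,f(\cdot,\cdot,M))$, i.e.\ \eqref{eqi.1}; the right-hand side is c\`adl\`ag and $Y\in\mathcal S^{2}(0,T;\BR^l)$ by the $\mathcal S^2$-convergence. The main obstacle is exactly the non-local tail $E\int_a^T d[H^1-H^2]_r$ in \eqref{eq6.leqt} (the ``future dependence'' allowed by (H3)); the backward induction circumvents it by using that on the last interval the terminal data coincide, and that on earlier intervals the future contribution has already been made negligible. A secondary difficulty is the merely monotone, non-Lipschitz dependence on $y$, which forces the monotonicity/continuity machinery of Proposition \ref{Proposition3} rather than a clean Lipschitz estimate when identifying the limit driver.
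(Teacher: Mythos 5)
Your proposal follows essentially the same route as the paper's proof: the same Picard iteration with the martingale argument frozen and existence supplied by Proposition \ref{Proposition3}, the same partition of $[0,T]$ with mesh $\le \frac{1}{2(1+16\lambda+\mu^+)}$, the same use of Lemma \ref{lm.waog1} together with the key observation that on the terminal subinterval the boundary term vanishes (terminal values both equal $\xi$) so a genuine $\frac14$-contraction appears there, the same backward induction over subintervals to convert the non-local tail $E\int_a^T d[H^1-H^2]_r$ into summable decay, and the same limit-passing scheme splitting the driver difference into an (H3)-controlled martingale part and a state part handled by subsequences, (H4), and (H3)/(H5)-based uniform integrability. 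The only cosmetic differences are that you track a clean geometric rate $O(q^n)$ with $q\in(\frac14,1)$ where the paper derives explicit polynomial-times-geometric bounds of the form $n^{p-k}(\frac14)^{n-1}$, and that you upgrade to $\mathcal S^2$-convergence by applying Proposition \ref{prop1} to the difference equation (the trick the paper uses in Step 3 of Proposition \ref{Proposition3}) instead of the paper's direct It\^o/Burkholder-Davis-Gundy computation.
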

\begin{proof}
Set $(Y^0,M^0)=(0,0)$. By Proposition \ref{Proposition3} for each $n\ge 1$ there exists  a unique solution $(Y^n,M^n)\in\sol$ to BSDE$(\xi,f(\cdot,\cdot,M^{n-1}))$.
Consider a partition of the interval $[0,T]$: $0=a_1<a_2<\dots<a_{p-1}<a_p=T$ , such that 
\[
(a_{i+1}-a_i)\le \frac{1}{2(1+16\lambda+\mu^+)},\quad i=1,\dots,p-1.
\]
Set 
\[
b_{i}:=E\int_{a_{i-1}}^{a_{i}}\,d[M^2-M^1]_r,\quad i=1,\dots,p,\qquad b:=\sum_{i=1}^{p} b_i.
\]
By Lemma \ref{lm.waog1} for each  $i\in\{0,\dots, p-1\}$,
\begin{align}
\label{eq.s.waog1}
\nonumber
\sup_{a_i\le t\le a_{i+1}}E|Y^{n+1}_t-Y^n_t|^2&+E\int_{a_i}^{a_{i+1}}\,d[M^{n+1}-M^{n}]_r\\&\quad
\le \beta E|Y^{n+1}_{a_{i+1}}-Y^{n}_{a_{i+1}}|^2+\frac14 E\int_{a_i}^{T}\,d[M^n-M^{n-1}]_r.
\end{align}
In particular for any $n\ge 2$,
\begin{equation}
\label{eq.s.waog2}
\sup_{a_{p-1}\le t\le T}E|Y^{n+1}_t-Y^n_t|^2+E\int_{a_{p-1}}^{T}\,d[M^{n+1}-M^{n}]_r
\le\frac14 E\int_{a_{p-1}}^{T}\,d[M^n-M^{n-1}]_r.
\end{equation}
From this we conclude that 
\begin{equation}
\label{eq.s.waog3yvx}
\sup_{a_{p-1}\le t\le T}E|Y^{n+1}_t-Y^n_t|^2+E\int_{a_{p-1}}^{T}\,d[M^{n+1}-M^{n}]_r
\le(\frac14)^{n-1}b_{p}\le (\frac14)^{n-1}b,\quad n\ge 2.
\end{equation}
We let 
\[
I^M_n(k)=E\int_{a_{k-1}}^{a_k}\,d[M^n-M^{n-1}]_r,\quad I^Y_n(k)=\sup_{a_{k-1}\le t\le a_k}E|Y^n_t-Y^{n-1}_t|,
\]
and
\[
I_n(k)=I^M_n(k)+I^Y_n(k).
\]
Under this notation \eqref{eq.s.waog3yvx} becomes 
\begin{equation}
\label{eq.s.waog3}
I_{n+1}(p)\le (\frac14)^{n-1}b,\quad n\ge 2.
\end{equation}
Observe also  that $b_i=I^M_2(i),\, i=1,\dots,p$. By Lemma \ref{lm.waog1} and (\ref{eq.s.waog3})
\begin{align}
\label{eq.s.waog4}
\nonumber
I_{n+1}(p-1)&
\le \beta E|Y^{n+1}_{a_{p-1}}-Y^{n}_{a_{p-1}}|^2+\frac14 E\int_{a_{p-2}}^{T}\,d[M^n-M^{n-1}]_r\\&\nonumber 
\le \beta \sup_{a_{p-1}\le t\le T}E|Y^{n+1}_t-Y^n_t|^2+\frac14 E\int_{a_{p-1}}^{T}\,d[M^n-M^{n-1}]_r\\&\nonumber\quad+\frac14 E\int_{a_{p-2}}^{a_{p-1}}\,d[M^n-M^{n-1}]_r
\\&
\le(\frac14)^{n-1}b(1+\beta)+\frac14E\int_{a_{p-2}}^{a_{p-1}}\,d[M^n-M^{n-1}]_r,\quad n\ge 2.
\end{align}
Thus,
\[
I_{n+1}(p-1)\le (\frac14)^{n-1}b(1+\beta)+\frac14I^M_n(p-1),\quad n\ge 2.
\]
Consequently,
\begin{align*}
I_{n+1}(p-1)&\le (\frac14)^{n-1}b(1+\beta)+\frac14I^M_n(p-1)\le (\frac14)^{n-1}b(1+\beta)+\frac14I_n(p-1)
\\&\le (\frac14)^{n-1}b(1+\beta)+\frac14\Big((\frac14)^{n-2}b(1+\beta)+\frac14I^M_{n-1}(p-1)\Big)\\&
\le 2(\frac14)^{n-1}b(1+\beta)+\frac{1}{4^2} I^M_{n-1}(p-1).
\end{align*}
Continuing in this fashion, we get  that for each $n\ge 2$,
\begin{align}
\label{eq.s.waog5}
I_{n+1}(p-1)\le (n-1)(\frac14)^{n-1}b(\beta+1)+(\frac14)^{n-1}b_{p-1}\le n(\frac14)^{n-1}b(1+\beta).
\end{align}
For the sake of clarity of the reasoning, we shall  make  one more step.
By Lemma \ref{lm.waog1},
\begin{align}
\label{eq.s.waog465og}
\nonumber
I_{n+1}(p-2) &\le  \beta E|Y^{n+1}_{a_{p-2}}-Y^{n}_{a_{p-2}}|^2+\frac14 E\int_{a_{p-3}}^{T}\,d[M^n-M^{n-1}]_r\\&
\le \beta I^Y_{n+1}(p-1)+\frac14 \sum_{i=p-2}^p I^M_n(i),
\quad n\ge 2.
\end{align}
By \eqref{eq.s.waog3}, \eqref{eq.s.waog5} for each $n\ge 2$,
\begin{align}
\label{eq.s.waog4b8j}
\nonumber
I_{n+1}(p-2) \le n(\frac14)^{n-1}b(1+\beta)\beta+(n-1)(\frac14)^{n-1}b(1+\beta)+(\frac14)^{n-1}b+\frac14I^M_n(p-2). 
\end{align}
Hence 
\[
I_{n+1}(p-2) \le 2n(\frac14)^{n-1}b(1+\beta)^2+\frac14I^M_n(p-2),\quad n\ge 2.
\]
Therefore
\begin{align*}
I_{n+1}(p-2) &\le 2n(\frac14)^{n-1}b(1+\beta)^2+\frac14\Big(2(n-1)(\frac14)^{n-2}b(1+\beta)^2+\frac14 I^M_{n-1}(p-2)\Big)\\&
\le 2\cdot(2n)(\frac14)^{n-1}b(1+\beta)^2+\frac{1}{4^2}I^M_{n-1}(p-2).
\end{align*}
Continuing in this fashion we get  that for each $n\ge 2$,
\begin{align}
\label{eq.s.waog5250}
I_{n+1}(p-2)\le 2n^2(\frac14)^{n-1}b(1+\beta)^2.
\end{align}
Proceeding as above  for $I_{n+1}(p-i)$ with $i=3,\dots,p-1$, we get 
\begin{equation}
\label{eq.s.waog6}
I_{n+1}(k)\le pn^{p-k}(\frac14)^{n-1}b(1+\beta)^{p-k},\quad k=1,\dots,p,\quad n\ge 2.
\end{equation}
Thus,
\begin{equation}
\label{eq.s.waog7}
\sup_{0\le t\le T}E|Y^{n+1}_t-Y^n_t|^2+E\int_{0}^{T}\,d[M^{n+1}-M^{n}]_r\le p^2n^{p}(\frac14)^{n-1}b(1+\beta)^p=:q^2_n.
\end{equation}
Clearly $\sum_{n\ge 1} q_n$ is convergent. For $Y\in B_{\mathbb F}(0,T; L^{2}(\Omega,P;\BR^l))$ and $M\in\MM^{2}_0(0,T;\BR^l)$
we set 
\[
\|(Y,M)\|:=\sqrt{ \sup_{0\le t\le T}E|Y_t|^2+E\int_{0}^{T}\,d[M]_r}.
\]
It is  routine to verify  that $(B_{\mathbb F}(0,T;L^{2}(\Omega,P;\BR^l))\times \MM^{2}_0(0,T;\BR^l),\|\cdot\|)$
is a Banach space. Observe that for $n<m$,
\begin{equation}
\label{eq3.cs.waog1}
\|(Y^m,M^m)-(Y^n,M^n)\|\le \sum_{i=n}^{m-1}q_i\le \sum_{i=n}^{\infty}q_i.
\end{equation}
Thus, $(Y^n,M^n)_{n\ge 1}$ is a Cauchy sequence in  $B_{\mathbb F}(0,T;L^{2}(\Omega,P;\BR^l))\times \MM^{2}_0(0,T;\BR^l)$. By It\^o's formula
\begin{equation*}
\begin{split}
|Y^m_t-Y^n_t|^2&+\int_t^T\,d[M^m-M^n]_r\\&
=2\int_t^T\langle f(r,Y^m_r,M^{m-1})-f(r,Y^n_r,M^{n-1}), Y^m_r-Y^n_r\rangle\,dr\\&
-2\int_t^T\langle Y^m_{r-}-Y^n_{r-}, d(M^m_r-M^n_r)\rangle,\quad t\in [0,T].
\end{split}
\end{equation*}
By  (H2)
\begin{align}
\label{eq6.45}
\nonumber\langle f(r,Y^m_r,M^{m-1})&-f(r,Y^n_r,M^{n-1}), Y^m_r-Y^n_r\rangle\\&
\le (1+\mu^+)|Y^m_r-Y^n_r|^2+|f(r,Y^m_r,M^{m-1})-f(r,Y^m_r,M^{n-1})|^2.
\end{align}
This combined with the previous inequality yields
\begin{equation*}
\begin{split}
E\sup_{0\le t\le T}|Y^m_t-Y^n_t|^2&+E\int_0^T\,d[M^m-M^n]_r\\&\quad
\le 4(1+\mu^+)E\int_0^T|Y^m_r-Y^n_r|^2\,dr \\&\quad+4 E\int_0^T|f(r,Y^m_r,M^{m-1})-f(r,Y^m_r,M^{n-1})|^2\,dr\\&\quad
+4E\sup_{0\le t\le T}\Big|\int_t^T(Y^m_{r-}-Y^n_{r-})\,d(M^m_r-M^n_r)\Big|.
\end{split}
\end{equation*}
By (H3) and the Burkholder-Davis-Gundy inequality there exists $C>0$ such that
\begin{equation*}
\begin{split}
E\sup_{0\le t\le T}|Y^m_t&-Y^n_t|^2+E\int_0^T\,d[M^m-M^n]_r\\&
\le 4(1+\mu^+)E\int_0^T|Y^m_r-Y^n_r|^2\,dr +4 \lambda E\int_0^T\,d[M^{m-1}-M^{n-1}]_r\\&
+4C E\Big(\int_0^T|Y^m_{r-}-Y^n_{r-}|^2\,d[M^m_r-M^n_r]\Big)^{\frac12}\\&
\le 4(1+\mu^+)E\int_0^T|Y^m_r-Y^n_r|^2\,dr +4 \lambda E\int_0^T\,d[M^{m-1}-M^{n-1}]_r\\&
+\frac12E\sup_{0\le t\le T}|Y^m_t-Y^n_t|^2+8C^2E\int_0^T\,d[M^m-M^n]_r.
\end{split}
\end{equation*}
Hence, 
\begin{equation*}
\begin{split}
E\sup_{0\le t\le T}|Y^m_t&-Y^n_t|^2+E\int_0^T\,d[M^m-M^n]_r\\&
\le 8(1+\mu^+)E\int_0^T|Y^m_r-Y^n_r|^2\,dr +8 \lambda  E\int_0^T\,d[M^{m-1}-M^{n-1}]_r\\&
+16C^2E\int_0^T\,d[M^m-M^n]_r\\&
\le(8\mu^+T+8T+16C^2)\|(Y^m,M^m)-(Y^n,M^n)\|^2\\&
+8 \lambda \|(Y^{m-1},M^{m-1})-(Y^{n-1},M^{n-1})\|^2.
\end{split}
\end{equation*}
From this and (\ref{eq3.cs.waog1}) we conclude that $(Y^n,M^n)_{n\ge 1}$ is a Cauchy sequence in the space $\sol$.
Therefore, there exists a pair of c\`adl\`ag processes $(Y,M)\in \sol$ such that
\begin{equation}
\label{eq3.waog.14}
E\sup_{0\le t\le T}|Y^n_t-Y_t|^2+E\int_0^T\,d[M^n-M]_r\rightarrow 0.
\end{equation}
By the definition of a solution to BSDE$(\xi,f(\cdot,\cdot,M^{n-1}))$,
\begin{equation}
\label{eq3.mpie.waog1}
Y^n_t=\xi+\int_t^Tf(r,Y^n_r,M^{n-1})\,dr-\int_t^T\,dM^n_r,\quad t\in [0,T]
\end{equation}
The final step  in the proof thus shall be  passing  to the limit in (\ref{eq3.mpie.waog1}). For this we have to cope with the nonlinear term of the equation.
Clearly,
\begin{align}
\label{eq3.waog.16}
\nonumber
|f(r,Y^n_r,M^{n-1})-f(r,Y_r,M)|&\le|f(r,Y^n_r,M)-f(r,Y^n_r,M^{n-1})|\\&\quad
+|f(r,Y^n_r,M)-f(r,Y_r,M)|.
\end{align}
By (H3) and (\ref{eq3.waog.14})
\begin{equation}
\label{eq3.waog.15}
\begin{split}
E\int_0^T |f(r,Y^n_r,M)-f(r,Y^n_r,M^{n-1})|\,dr
\le  \sqrt \lambda \sqrt T\Big(E\int_0^T \,d[M^{n-1}-M]_r\Big)^{\frac12} \rightarrow 0.
\end{split}
\end{equation}
Now we shall focus on the second term on the right-hand side of (\ref{eq3.waog.16}). Let $(n_k)$ be a subsequence of $(n)$.
Then by (\ref{eq3.waog.14}) there exists a further subsequence $(n_{k_l})$ such that
\begin{equation}
\label{eq3.waog.17}
\sum_{l=1}^\infty E\sup_{0\le t\le T}|Y^{n_{k_l}}_t-Y_t|<\infty.
\end{equation}
Set $l:= n_{k_l}$. For $\nu>0$ set 
\[
A_\nu:=\{\omega\in\Omega: \exists_{t\in [0,T]}\,\,\exists_{l\ge 1}\,\, |Y^l_t(\omega)|\ge \nu\}.
\]
We have 
\begin{equation}
\label{eq3.waog.18}
\begin{split}
P(\int_0^T|f(r,Y^l_r,M)&-f(r,Y_r,M)|\,dr >\varepsilon)\le P(A_\nu)\\&
+P(\mathbf{1}_{A^c_\nu}\int_0^T |f(r,Y^l_r,M)-f(r,Y_r,M)|\,dr>\varepsilon).
\end{split}
\end{equation}
By (\ref{eq3.waog.17})
\begin{equation}
\label{eq3.waog.19}
\begin{split}
P(A_\nu)&=P(\exists_{l\ge 1}\,\, \sup_{0\le t\le T}|Y^l_t|\ge \nu)\\&
\le P(\exists_{l\ge 1}\,\, \sup_{0\le t\le T}|Y^l_t-Y_t|\ge \nu/2)+ P(\sup_{0\le t\le T}|Y_t|\ge  \nu/2)\\&\le
\frac2\nu E\sup_{0\le t\le T}|Y_t|+\frac{2}{\nu} \sum_{l=1}^\infty E\sup_{0\le t\le T}|Y^l_t-Y_t|\rightarrow 0,\quad \nu\rightarrow \infty. 
\end{split}
\end{equation}
By (\ref{eq3.waog.17}) and (H4)
\begin{equation}
\label{eq3.waog.20}
|f(r,Y^l_r,M)-f(r,Y_r,M)|\rightarrow 0,\quad l\rightarrow \infty,\quad \ell ^1\otimes P\mbox{-a.e.}
\end{equation}
Observe that 
\begin{align*}
\mathbf{1}_{A^c_\nu}|f(r,Y^l_r,M)-f(r,Y_r,M)|&\le |f(r,Y_r,M)-f(r,Y_r,0)|+ |f(r,Y^l_r,M)-f(r,Y^l_r,0)|\\&\quad
+\mathbf{1}_{A^c_\nu}|f(r,Y^l_r,0)|+\mathbf{1}_{A^c_\nu}|f(r,Y_r,0)|\\&\le  |f(r,Y_r,M)-f(r,Y_r,0)|+ |f(r,Y^l_r,M)-f(r,Y^l_r,0)|\\&\quad
+2\sup_{|y|\le \nu}|f(r,y,0)|=:g^\nu_l(r).
\end{align*}
By  (H3) and (H5), the family $(g^\nu_l)_{l\ge 1}$ is uniformly integrable with respect to the measure $\ell^1\otimes P$.
From this, (\ref{eq3.waog.20}), and the Vitali  convergence theorem,
we conclude that the second term on the right-hand side of  (\ref{eq3.waog.18}) goes to zero as $l\rightarrow \infty$.
Therefore, letting $l\rightarrow \infty$ in (\ref{eq3.waog.18}) we get
\[
\limsup_{l\rightarrow \infty}P(\int_0^T|f(r,Y^l_r,M)-f(r,Y_r,M)|\,dr >\varepsilon)\le P(A_\nu).
\]
Then, letting $\nu\rightarrow \infty$ and using (\ref{eq3.waog.19}) we obtain 
\[
\int_0^T|f(r,Y^l_r,M)-f(r,Y_r,M)|\,dr\rightarrow 0,\quad l\rightarrow \infty
\]
in probability $P$. Since $(n_k)$ was an arbitrary subsequence of $(n)$, we get that the above convergence holds with
$l$ replaced by $n$. This combined with (\ref{eq3.waog.16}), (\ref{eq3.waog.15}) implies 
\[
\sup_{0\le t\le T}\Big|\int_t^Tf(r,Y^n_r,M^{n-1})-\int_t^T f(r,Y_r,M)\,dr\Big|\rightarrow 0,\quad l\rightarrow \infty
\]
in probability $P$. By using this convergence  and (\ref{eq3.waog.14}), we let $n\rightarrow \infty$ in (\ref{eq3.mpie.waog1})
to get
\[
Y_t=\xi+\int_t^Tf(r,Y_r,M)\,dr-\int_t^T\,dM_r,\quad t\in [0,T].
\]
Thus, $(Y,M)$ is a solution to BSDE$(\xi,f)$. 
\end{proof}

\section{Lipschitz type condition on the driver}
\label{sec7}
The purpose of this section is to demonstrate that the method presented in Section \ref{sec6}
also works for generators of the form
\[
f:[0,T]\times\Omega\times\sol\to \BR^l.
\]
As  usual  we assume that for any $(Y,M)\in\sol$ process  $f(\cdot,Y,M)$ is progressively measurable.

\begin{definition}\label{def2}
We say that a pair $(Y,M)\in\sol$  is a solution of BSDE($\xi,f$) if
\begin{enumerate}
\item[(a)] $Y$ is c\`adl\`ag,
\item[(b)] $\int^T_0|f(r,Y,M)|\,dr<+\infty$,
\item[(c)] $Y_t=\xi+\int^T_t f(r,Y,M)\,dr-\int^T_t\,dM_r,\quad t\in[0,T].$
\end{enumerate}
\end{definition}

As to the regularity of $f$ with respect to $Y$-variable, we consider  an analogue 
of condition (H3). We formulate it as a one condition which expresses regularity of $f$ with respect to $Y$ and $M$-variable.

\begin{enumerate}
\item[(A)] There exists $L>0$ such that for any $t\in [0,T]$, $M,M'\in \MM^2_0(0,T;\BR^l)$, $Y,Y'\in \mathcal S^2(0,T;\BR^l)$
\[
E\int_t^T|f(r,Y,M)-f(r,Y',M')|^2\,dr\le L(E\int_t^T\,d[M-M']_r+E\int_t^T|Y_r-Y'_r|^2\,dr).
\]
\end{enumerate}

This is thus a type of Lipschitz continuity, so in some sense stronger than (H2), however it permits $f(t,Y,M)$
to depend  not only on the future of the process $M$ but also on the future of the process $Y$.
This generalization allows us in particular to consider in place of $Y_t$ in Examples \ref{ex.1}--\ref{ex.6}, $Y_{t+\delta(t)}$
or $\mathcal L(Y_t)$.  An  existence result   for \eqref{eqi.101} with $f$ satisfying condition of type (A)  was provided in \cite{CN}.
The authors, however, assumed in \cite{CN} that $f$ is of the special form \eqref{eqi.0} with $h$ defined in Example \ref{ex.3}.

\begin{theorem}
Assume that (H1) and (A) are in force. Then there exists a unique solution to BSDE$(\xi,f)$.
\end{theorem}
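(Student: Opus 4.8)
The plan is to mimic the Picard-iteration scheme behind Theorem \ref{th3}, using that condition (A), being of Lipschitz type, lets us freeze \emph{both} arguments of $f$ at each step. I would set $(Y^0,M^0)=(0,0)$ and, given $(Y^{n-1},M^{n-1})\in\sol$, form the frozen driver $g^n(r):=f(r,Y^{n-1},M^{n-1})$, which no longer depends on the unknown. By the Cauchy--Schwarz inequality, (A) (applied with $(Y',M')=(0,0)$) and (H1), one checks that $E(\int_0^T|g^n(r)|\,dr)^2<\infty$, so $g^n$ is an admissible $M$-independent driver satisfying (H1), (H2) (with $\mu=0$), (H4) and (H5) (the latter trivially, since $g^n$ does not depend on $y$). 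Proposition \ref{Proposition3} then yields $(Y^n,M^n)\in\sol$ solving BSDE$(\xi,g^n)$; explicitly $Y^n_t=E(\xi+\int_t^Tg^n(r)\,dr\mid\FF_t)$ with $M^n$ its martingale part, which also shows this solution is unique. The iteration is thus well defined with every iterate in $\sol$.

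The core is an analogue of Lemma \ref{lm.waog1} for consecutive iterates. Applying It\^o's formula to $|Y^{n+1}-Y^n|^2$ on an interval $[t,b]$, using Young's inequality on the cross term, taking expectations (the stochastic integral being a true martingale), and bounding the driver increment via (A),
\[
E\int_t^T|g^{n+1}(r)-g^n(r)|^2\,dr\le L\Big(E\int_t^T d[M^n-M^{n-1}]_r+E\int_t^T|Y^n_r-Y^{n-1}_r|^2\,dr\Big),
\]
then running Gronwall's lemma in $t$ on $[a,b]$ and choosing a large Young parameter together with a mesh for which $(b-a)$ is small, I would obtain, for some $\beta\ge 1$,
\[
\sup_{a\le t\le b}E|Y^{n+1}_t-Y^n_t|^2+E\int_a^b d[M^{n+1}-M^n]_r\le \beta E|Y^{n+1}_b-Y^n_b|^2+\tfrac14\Big(E\int_a^T d[M^n-M^{n-1}]_r+E\int_a^T|Y^n_r-Y^{n-1}_r|^2\,dr\Big).
\]
The sole difference from Lemma \ref{lm.waog1} is the extra future-dependent $Y$-increment on the right; since it enters weighted by the small mesh, it merges into the same combined increment the telescoping controls.

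I would then repeat the backward telescoping from the proof of Theorem \ref{th3} on the combined quantity $I_n(k)=I^M_n(k)+I^Y_n(k)$, where $I^M_n(k)=E\int_{a_{k-1}}^{a_k}d[M^n-M^{n-1}]_r$ and $I^Y_n(k)=\sup_{a_{k-1}\le t\le a_k}E|Y^n_t-Y^{n-1}_t|^2$. Since $E\int_{a_i}^T|Y^n_r-Y^{n-1}_r|^2\,dr\le \max_j(a_j-a_{j-1})\sum_{j>i}I^Y_n(j)$, the recursion closes exactly in the form found there, giving a bound $I_{n+1}(k)\le p\,n^{p-k}(\tfrac14)^{n-1}b(1+\beta)^{p-k}$ and hence $\sup_{0\le t\le T}E|Y^{n+1}_t-Y^n_t|^2+E\int_0^T d[M^{n+1}-M^n]_r\le q_n^2$ with $\sum_n q_n<\infty$. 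Thus $(Y^n,M^n)$ is Cauchy in $B_{\mathbb F}(0,T;L^2(\Omega,P;\BR^l))\times\MM^2_0(0,T;\BR^l)$, and the It\^o--Burkholder--Davis--Gundy bootstrap used in Theorem \ref{th3} promotes it to a Cauchy sequence in $\sol$ with limit $(Y,M)\in\sol$. Passage to the limit is now routine: (A) gives $f(\cdot,Y^n,M^n)\to f(\cdot,Y,M)$ in $\HH^2_{\mathbb F}(0,T;\BR^l)$ (no monotonicity/continuity argument being needed, unlike in Theorem \ref{th3}), so $\int_t^Tf(r,Y^n,M^n)\,dr\to\int_t^Tf(r,Y,M)\,dr$ and $(Y,M)$ solves BSDE$(\xi,f)$.

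Uniqueness follows from the same localized estimate applied to two solutions $(Y,M)$ and $(Y',M')$: It\^o's formula and (A) give the inequality above with the two consecutive iterates replaced by these solutions and the previous-iterate increment replaced by $(Y-Y',M-M')$ itself. On the terminal interval $[a_{p-1},T]$ one has $Y_T=Y'_T=\xi$, so the right-hand side is self-referential, and for a fine enough mesh the coefficients of $\sup E|Y-Y'|^2$ and $E\int d[M-M']_r$ are strictly below one; absorbing them forces $Y=Y'$ and $M=M'$ on $[a_{p-1},T]$, after which a finite backward induction over the partition yields equality on all of $[0,T]$. The main obstacle is exactly the phenomenon motivating Section \ref{sec6}: (A) controls the driver increment only by an integral over the \emph{future} $[t,T]$, so no single global Gronwall estimate closes, and one must verify that the additional future-dependent $Y$-term produced by (A) --- absent from Lemma \ref{lm.waog1} --- does not spoil the polynomial-times-geometric decay; this is precisely why the telescoping is carried out on the combined norm $I_n=I^M_n+I^Y_n$ with a sufficiently fine mesh.
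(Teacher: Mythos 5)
Your proposal is correct, and while it follows the same overall architecture as the paper (Picard iteration, a localized small-interval estimate in the spirit of Lemma \ref{lm.waog1}, backward telescoping over a partition, then an It\^o--BDG bootstrap and limit passage), the iteration itself is genuinely different. The paper freezes \emph{only} the martingale argument: its $n$-th iterate solves BSDE$(\xi, f(\cdot,\cdot,M^{n-1}))$, whose driver still depends on the whole (possibly future) path of the unknown $Y^n$; this is why the paper must import the existence of each iterate from \cite[Proposition 3.7]{CN}, and why its version of \eqref{eq7.1} produces a self-referential future-$Y$ term at the current level. You instead freeze \emph{both} arguments, so each iterate is given explicitly by $Y^n_t=E(\xi+\int_t^T f(r,Y^{n-1},M^{n-1})\,dr\mid\FF_t)$ with its martingale part --- no external fixed-point result is needed, and Proposition \ref{Proposition3} applies trivially. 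The price is that (A) now injects the \emph{previous-level} increment $E\int_\cdot^T|Y^n_r-Y^{n-1}_r|^2\,dr$ into the right-hand side of the localized estimate, which you correctly absorb by telescoping the combined quantity $I_n=I^M_n+I^Y_n$ with a mesh small relative to $L$; this closes in exactly the same polynomial-times-geometric form, so the Cauchy property and limit passage (now purely via (A), no monotonicity argument) go through. A further point in your favor: you spell out the uniqueness argument (localized estimate for two solutions, absorption on the terminal subinterval using $Y_T=Y'_T=\xi$, then finite backward induction), which the paper's two-line proof sketch asserts but never makes explicit. In short, both routes work; the paper's minimizes changes to Section \ref{sec6} at the cost of citing an external existence result, while yours is more elementary and self-contained at the cost of slightly heavier bookkeeping in the recursion.
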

\begin{proof}
\label{th7.1}
It is enough to repeat the arguments of Section \ref{sec6} with small modifications.
The proof of Lemma \ref{lm.waog1} may  be repeated step by step with one exception that 
instead of \eqref{th3.2waog} one should apply
\begin{equation}\label{eq7.1}
\begin{split}
&E\int_t^T\langle Y^1_r-Y^2_r, f(r,Y^1,H^1)-f(r,Y^2,H^2)\rangle\,dr\\
&\quad\le \frac12(1+\nu+L)E\int_t^T|Y^1_r-Y^2_r|^2\,dr+\frac{L}{2(1+\nu)}E\int_t^T\,d[H^1-H^2]_r,
\end{split}
\end{equation} 
which follows directly from (A). The whole proof of Theorem \ref{th3} up to \eqref{eq3.mpie.waog1}
may be repeated under assumptions (H1), (A) with  one exception that instead of \eqref{eq6.45}
one should employ the following inequality
\begin{align}
\label{eq7.2}
\nonumber E\int_0^T|\langle f(r,Y^m,M^{m-1})&-f(r,Y^n,M^{n-1}),Y^m_r-Y^n_r\rangle|\,dr \\&
\le (1+L)E\int_0^T|Y^m_r-Y^n_r|^2\,dr+L E\int_0^T\,d[M^{m-1}-M^{n-1}]_r,
\end{align}
which once again follows directly  from (A). Here note that the existence of $(Y^n,M^n)$
follows from \cite[Proposition 3.7]{CN}.  Now we can easily pass to the limit  in \eqref{eq3.mpie.waog1}
by using condition (A) again.
\end{proof}

\subsection*{Acknowledgements}
{\small  T. Klimsiak is supported by Polish National Science Centre: Grant No. 2016/23/B/ST1/01543.  M. Rzymowski  acknowledges the support of the  Polish  National Science Centre: Grant No.  2018/31/N/ST1/00417.}

\end{document}